\documentclass[10pt]{amsart}
\usepackage{amsmath}
\usepackage{amssymb,amscd}
\usepackage{graphicx}
\usepackage{mathdots}
\usepackage{mathrsfs}       
\usepackage{graphicx}
\usepackage{bbm} 
\usepackage{microtype} 

\usepackage{colonequals} 
\usepackage{mathtools}

\usepackage{tikz,tkz-euclide}
\usepackage{tikz-cd}
\usetikzlibrary{arrows}
\usepackage{tikz-3dplot}

\usepackage{color}\definecolor{darkblue}{rgb}{0,0.1,.5}
\usepackage[colorlinks=true,linkcolor=darkblue, urlcolor=darkblue, citecolor=darkblue]{hyperref}
\usepackage{cleveref}

\theoremstyle{plain}
\newtheorem{theorem}{Theorem}[section]
\newtheorem{lemma}[theorem]{Lemma}
\newtheorem{proposition}[theorem]{Proposition}

\newtheorem{problem}[theorem]{Problem}

\theoremstyle{definition}
\newtheorem{definition}[theorem]{Definition}
\newtheorem{example}[theorem]{Example}

\theoremstyle{remark}
\newtheorem*{remark}{Remark}

\numberwithin{equation}{section}

\def \begineq{\begin{equation}}
\def \endeq{\end{equation}}

\def\Bier{\mathrm{Bier}}
\def\Facet{\mathrm{Facet}}

\def\pol{\mathrm{pol}}
\def\vc{\mathrm{vc}}

\def\sk{\mathrm{sk}}

\def\zk{\mathcal Z_K}
\def\rk{\mathcal R_K}

\def\Z{\mathbb Z}

\def\R{\mathbb R}

\def\N{\mathbb N}

\DeclareMathAlphabet{\mathbbmsl}{U}{bbm}{m}{sl}

\DeclareMathOperator{\cone}{Cone}

\DeclareMathAlphabet{\mathpzc}{OT1}{pzc}{m}{it}

\newcommand{\gen}[3]{{\mathpzc{#1}}_{#2}^{#3}}

\makeatletter
\@namedef{subjclassname@2020}{%
  \textup{2020} Mathematics Subject Classification}
\makeatother

\title[On the combinatorics of Murai spheres and its applications]{On the combinatorics of Murai spheres and its applications}

\author[Limonchenko]{Ivan Limonchenko}
\address{Mathematical Institute of the Serbian Academy of Sciences
and Arts (SASA), Belgrade, Serbia}
\email{ivan.limoncenko@turing.mi.sanu.ac.rs}

\author[Vavpeti\v{c}]{Ale\v{s} Vavpeti\v{c}}
\address{University of Ljubljana, Slovenia}
\email{ales.vavpetic@fmf.uni-lj.si}

\subjclass[2020]{05E45, 05C38, 13F55, 52B20, 57S12}

\keywords{Bier sphere, Buchstaber number, chordal graph, multicomplex, Murai sphere}

\begin{document}

\maketitle

\begin{abstract}
We classify the combinatorial types of Murai spheres in dimensions $1$ and $2$, thereby showing that the corresponding convex simple polytopes have Delzant realizations. Then we describe all chordal Murai spheres $\Bier_c(M)$ with $c\in\N^m$ and $m\leq 2$. Finally, we find all possible values for the Buchstaber and chromatic numbers of arbitrary Murai spheres.
\end{abstract}

\section{Introduction}

This paper is devoted to studying combinatorial properties of a class of simplicial spheres introduced in~\cite{Mu} as generalized Bier spheres. We call them Murai spheres. Recall that for an abstract simplicial complex $K$ on $[m]=\{1,2,\ldots,m\}$ different from the whole simplex $\Delta_{[m]}$ with $m$ vertices, its Bier sphere $\Bier(K)$ was defined in~\cite{Bier} as a deleted join of the complex $K$ and its Alexander dual complex $K^\vee$. Furthermore, it was shown that $\Bier(K)$ is an $(m-2)$-dimensional PL-sphere with the number of vertices varying between $m$ and $2m$, see~\cite{Matousek, Longueville}. This construction has been studied intensively and found numerous applications in the framework of topological combinatorics~\cite{Matousek}, geometrical combinatorics~\cite{BPSZ05}, polytope theory~\cite{Zivaljevic19,Zivaljevic21}, game theory~\cite{Zivaljevic23}, combinatorial commutative algebra~\cite{HK,LZ}, toric geometry and topology~\cite{LS,LV,LZ,CYY}.

Similarly, given a $c$-multicomplex $M$ with $c\in\N^m, m\geq 1$ different from the whole polynomial algebra $S=\Bbbk[x_1,x_2,\ldots,x_m]$, its generalized Bier sphere $\Bier_c(M)$ was defined in~\cite{Mu} as a certain simplicial complex on $[|c|+m]$, $|c|=c_1+c_2+\ldots+c_m$. In the special case $c=(1,1,\ldots,1)$, the complex $\Bier_c(M)$ is isomorphic to $\Bier(K_M)$, where $K_M$ is the simplicial complex corresponding to $M$. Moreover, $\Bier_c(M)$ is a $(|c|-2)$-dimensional simplicial sphere whose number of vertices ranges between $|c|$ and $|c|+m$, see~\cite{Mu}.

It was also shown in~\cite{Mu} that Murai spheres share certain important combinatorial properties with classical Bier spheres, such as being shellable and edge decomposable simplicial spheres~\cite{BPSZ05, Mu}. Furthermore, it was proved in~\cite{LZ} that the class of flag Bier spheres coincides with the class of flag Murai spheres. Moreover, as in the case of Bier spheres, the classical Steinitz problem---namely, whether a given simplicial sphere admits a polytopal realization---remains open for Murai spheres. In fact, there exist infinitely many non-polytopal Bier spheres~\cite{Matousek}, which immediately implies the existence of infinitely many non-polytopal Murai spheres as well. However, no explicit examples of non-polytopal Murai spheres have been constructed so far. One of our aims is to further analyze the combinatorics of Murai spheres and identify the widest possible class of polytopal Murai spheres, thereby obtaining constraints for a Murai sphere to be non-polytopal.

Another aim of this paper is to study certain properties and invariants of Murai spheres that are important from the viewpoint of toric geometry (Delzant realization) and toric topology (chordality, Buchstaber and chromatic numbers). 

A lattice simple polytope is said to be a Delzant polytope if its normal fan is regular; that is, the primitive normal vectors of facets meeting at each vertex form a lattice basis. We say that a (convex) simple polytope has a Delzant realization if it is combinatorially equivalent to a Delzant polytope. In dimension $2$, every simple polytope (that is, every polygon) admits a Delzant realization. In contrast, this fails in dimension~$3$: the dodecahedron provides a counterexample. Indeed, since it has no triangular or quadrilateral facets, it admits no Delzant realization by~\cite{Del05}.

On the other hand, all simple polytopes corresponding to polytopal Bier spheres have Delzant realizations~\cite{LTZ}.
Delzant polytopes are important in toric geometry~\cite{CLS}, since they give rise to nonsingular complete toric varieties (toric manifolds), see~\cite{Danilov}. The latter complex algebraic varieties, alongside with their topological counterparts called quasitoric manifolds~\cite{DJ} that are studied in the framework of toric topology~\cite{BP02,TT}, have found various applications in bordism theory~\cite{BPR}, hyperbolic geometry~\cite{BEMPP}, topological combinatorics~\cite{BZ}, differential topology~\cite{BG}, and other areas. In the case of Bier spheres, a canonical construction of such a toric manifold was given in~\cite{LTZ}. In this paper, we classify all combinatorial Murai spheres in dimensions~$1$ and~$2$, which immediately implies that the corresponding simple polytopes admit Delzant realizations. On the other hand, in every even dimension $d\geq 6$ we construct Murai spheres that do not admit regular realizations. 

The classification of chordal Bier spheres was given in~\cite{LV}. Recall that a simplicial complex is called chordal if its $1$-skeleton contains no induced cycles of length greater than three. It was shown in~\cite{LV} that in all dimensions greater than one the class of chordal Bier spheres coincides with the class of stacked Bier spheres. Here, stacked means that the sphere can be obtained from the boundary of a simplex by consecutively applying stellar subdivisions in facets. In general, any stacked sphere is chordal by~\cite{Kalai}, whereas the converse does not hold. In this paper, we classify all the $c$-multicomplexes $M$ with $c\in\N^m, m\leq 2$ such that $\Bier_c(M)$ is a chordal sphere; examples show some of them are not stacked spheres. In fact, we prove that join of boundaries of any two simplices and boundaries of certain cyclic polytopes can arise as Murai spheres.

The (complex) Buchstaber number of a simplicial complex $K$ with $m$ vertices equals the maximal rank of a toric subgroup in the compact torus $\mathbb T^m$ acting freely on the moment-angle-complex $\zk$. When $K$ is a polytopal sphere, the existence of a quasitoric manifold over $K$ is equivalent to the maximality of the Buchstaber number $s(K)$. It was shown in~\cite{LS, LV} that for any Bier sphere this invariant attains its maximal possible value, namely $s(\Bier(K))=M-N$, where $M=f_0(\Bier(K))$ is the number of vertices of $\Bier(K)$ and $N=\dim(\Bier(K))+1$. This fact also follows from the construction of a canonical regular realization of a Bier sphere given in~\cite{LTZ}. In this paper, we prove that for any Murai sphere its Buchstaber number is either maximally possible or one less. Moreover, we provide examples showing that both possibilities can occur.

\section{Basic definitions, examples and constructions}

In this section, we collect the main definitions and recall the basic results on Murai spheres needed in the sequel. Murai spheres were introduced in~\cite{Mu} as a generalization of the Bier sphere construction from simplicial complexes to arbitrary multicomplexes. We also introduce the first examples of Murai spheres that cannot be realized as Bier spheres.

Following~\cite{Mu}, we fix the following notation throughout this section:

\begin{itemize}
\item integer number $m\geq 1$; 
\item two integer vectors, $c=(c_1,\ldots,c_m)$ and $\bar{c}=(c_{1}+1,\ldots,c_{m}+1)$, in $\N^m$;  
\item integer number $|c|:=c_1+c_2+\ldots+c_m$;
\item polynomial algebra $S=\Bbbk[x_1,\ldots,x_m]$ over a field $\Bbbk$.
\end{itemize}

To define a Murai sphere, we first need to introduce the notion of a $c$-multicomplex and the concept of multicomplex-theoretic Alexander duality. 

\begin{definition}
By a $c$-\emph{multicomplex} we mean a nonempty set $M$ of $c$-\emph{monomials} in $S$, that is: 
$$
x^{a}:=x_1^{a_1}\cdot\ldots\cdot x_{m}^{a_m}\text{ with } a_i\leq c_i\text{ for all }1\leq i\leq m
$$
such that if $m_1$ divides $m_2$ and $m_2\in M$, then $m_1\in M$.
Its {\emph{Alexander dual}} with respect to $c$ is a $c$-multicomplex $M^\vee$ defined by
$$
M^\vee := \{(x^{a})^c\,|\,x^{a}\text{ is a }c\text{-monomial such that }x^{a}\notin M\},
$$
where
$$
(x^{a})^c:=x_1^{c_1-a_1}\cdot\ldots\cdot x_m^{c_m-a_m}.
$$
We call $M$ a {\emph{full}} $c$-multicomplex if it is the set of all $c$-monomials and a {\emph{proper}} $c$-multicomplex, otherwise. 
\end{definition}

\begin{remark}
Note that for $c=(1,\ldots,1)\in\N^m$, a $c$-multicomplex $M$ is essentially the same as an \emph{(abstract) simplicial complex} on $[m]$, where $[m]:=\{1,2,\ldots,m\}$; the latter complex will be denoted by $K_M$. In particular, $M$ is $(1,\ldots,1)$-full if and only if $K_M=\Delta_{[m]}$, the power set of $[m]$.
\end{remark}

Elements of a multicomplex $M$ that are maximal with respect to the divisibility partial order relation on $M$ are called \emph{generators} of $M$; their set is denoted by $\max(M)$. Similarly, we define \emph{minimal non-elements} of $M$; their set is denoted by $\min(M)$. The same notation is used in the case of a simplicial complex, since it can be considered as a particular case of a multicomplex, see the above remark. Finally, we use angle brackets to indicate the fact that a set of $c$-monomials generates a $c$-multicomplex; in particular, we write:
$$
M=\langle m\,|\,m\in\max(M)\rangle.
$$

To describe a $c$-multicomplex $M$, one needs to know either the set $\max(M)$ of its generators, or the set of its minimal non-elements $\min(M)$. Moreover, the duality between these two objects can be easily described in the language of monomial ideals. The ideal-theoretic Alexander duality goes as follows. 

\begin{definition}
Let $I\subseteq S$ be a $c$-\emph{ideal}, that is, $I$ is generated by $c$-monomials. Its {\emph{Alexander dual}} with respect to $c$ is a $c$-ideal $I^\vee\subset S$ defined by
$$
I^\vee = \{(x^{a})^c\,|\,x^{a}\text{ is a }c\text{-monomial in }S\text{ such that }x^{a}\notin I\}.
$$
\end{definition}

Denote by $I_c(M)\subset S$ the $c$-ideal generated by all $c$-monomials that do not belong to $M$. Note that for $c=(1,\ldots,1)$, this yields the \emph{Stanley--Reisner ideal} (or, the \emph{face ideal}) of the simplicial complex $K_M$ corresponding to the $c$-multicomplex $M$. Moreover, it is easy to see that
$$
(I_c(M))^\vee = I_c(M^\vee).
$$

Given a monomial ideal $I\subset S$, denote by $G(I)$ the unique minimal set of monomial generators of $I$.
The next observation was proved in~\cite[Lemma 4.3]{LZ}.

\begin{proposition}[\cite{LZ}]\label{MinimalNonMonmomialsProp}
For any proper $c$-multicomplex $M$, one has:
$$
G(I_c(M))=\{(x^a)^c\,|\,x^a\in\max(M^\vee)\}=\min(M)\text{ and }
G(I_c(M)^\vee)=\{(x^a)^c\,|\,x^a\in\max(M)\}=\min(M^\vee).
$$
\end{proposition}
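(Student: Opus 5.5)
The plan is to work entirely with exponent vectors. A $c$-monomial $x^a$ is identified with the lattice point $a$ in the box $B_c=\prod_i\{0,1,\ldots,c_i\}$, and divisibility becomes the componentwise order. The map $a\mapsto c-a$, i.e. $x^a\mapsto(x^a)^c$, is an order-reversing involution of $B_c$. In this language $M$ is a down-set in $B_c$, its complement $B_c\setminus M$ is an up-set, and
$$M^\vee=\{c-a\mid a\in B_c\setminus M\}$$
is the image of that up-set under the involution. Since an order-reversing involution turns up-sets into down-sets, $M^\vee$ is indeed a down-set, and it is nonempty exactly because $M$ is proper.

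For the first chain of equalities, I would first identify $G(I_c(M))$ with $\min(M)$. The ideal $I_c(M)$ is generated by the monomials of $B_c\setminus M$. Its minimal monomial generators are those elements of $B_c\setminus M$ not divisible by any other element of that set, that is, the minimal elements of $B_c\setminus M$. These are by definition the minimal non-elements of $M$. One point needs checking here: a monomial of $I_c(M)$ dividing a $c$-monomial is itself a $c$-monomial, so nothing outside the box interferes and minimality in $S$ agrees with minimality in $B_c$. Next, the involution sends maximal elements of the down-set $M^\vee$ to minimal elements of $B_c\setminus M$: it reverses order and maps $M^\vee$ bijectively onto $B_c\setminus M$. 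Hence $\{(x^a)^c\mid x^a\in\max(M^\vee)\}=\min(M)$.

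For the second chain, I would apply the first to $M^\vee$ in place of $M$. This requires $M^\vee$ to be proper, which holds because $M$ is nonempty: $1\in M$, so $x^c\notin M^\vee$. It also requires the involutivity $(M^\vee)^\vee=M$, which follows since the complement of $M^\vee$ in $B_c$ is $\{c-a\mid a\in M\}$. Finally I would use the identity $I_c(M)^\vee=I_c(M^\vee)$ recorded above. Substituting gives $G(I_c(M)^\vee)=G(I_c(M^\vee))=\{(x^a)^c\mid x^a\in\max(M)\}=\min(M^\vee)$.

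The only delicate step is making sure the ideal-theoretic notions (generators in $S$, where the ideal contains non-$c$-monomials) match the box-theoretic ones. I would handle this by restricting attention to $c$-monomials as explained above; beyond that, the argument is a formal consequence of the order-reversing involution.
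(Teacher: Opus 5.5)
Your proof is correct and complete. The paper gives no proof of its own here; it imports the statement from \cite[Lemma 4.3]{LZ}. Your argument via the order-reversing involution $a\mapsto c-a$ on the box $B_c$ is therefore a self-contained replacement. Its three steps are all sound: the identification $G(I_c(M))=\min(B_c\setminus M)$, the transport of $\max(M^\vee)$ onto $\min(B_c\setminus M)$, and the reduction of the second chain to the first via $(M^\vee)^\vee=M$ and the properness of $M^\vee$.

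Two small points are worth making explicit.

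First, you read $\min(M)$ as the set of minimal elements among the $c$-monomials not in $M$. This is the only reading under which the statement is true. If all monomials of $S$ were allowed, $x_i^{c_i+1}$ would be a minimal non-element whenever $x_i^{c_i}\in M$, yet it never lies in $G(I_c(M))$.

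Second, you rely on the identity $I_c(M)^\vee=I_c(M^\vee)$, which the paper only asserts. Its definition of $I^\vee$ has to be read as the ideal \emph{generated} by the displayed set. You can close this in one line with your own setup. Since $B_c\setminus M$ is an up-set, the $c$-monomials lying outside $I_c(M)$ are exactly the elements of $M$. Hence $I_c(M)^\vee$ is generated by $\{(x^a)^c\mid x^a\in M\}=B_c\setminus M^\vee$, which is precisely the generating set of $I_c(M^\vee)$.
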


Now we are ready to describe the set on which we are going to define a Murai sphere. First, we define the sets
$$
\tilde{X}_i := \{\gen{x}{i}{0},\ldots,\gen{x}{i}{c_i}\}\text{ for each }1\leq i\leq m.
$$
Then we consider their union $\tilde{X} := \tilde{X}_1\cup\tilde{X}_2\cup\ldots\cup\tilde{X}_m$. Finally, for every $c$-monomial $x^a$, set
$$
F_c(x^a) := \tilde{X}\setminus \{\gen{x}{1}{a_1},\ldots,\gen{x}{m}{a_m}\}
$$
and, for every pure power of a variable $x_i^j$, $0<j\le c_i$, set 
$$
G(x^a;x_i^j) := F_{c}(x^a)\setminus\{\gen{x}{i}{j}\}.
$$

In~\cite{Mu}, the vertex $\gen x i j$ was denoted by $x_i^{(j)}$; we use a different notation mainly to keep the expressions for faces of a Murai sphere shorter. We use~\cite[Proposition 1.10]{Mu} as a definition of Murai sphere. 

\begin{definition}
Let $M$ be a proper $c$-multicomplex. The \emph{Murai sphere} (or, a \emph{generalized Bier sphere}) of $M$ is a simplicial complex $\Bier_c(M)$ on $\tilde{X}$ with the set of maximal faces (or, facets)
$$
\Facet_c(M)=\{G(x^a;x_i^j)\mid x^a\in M,x^a\diamond x_i^j\not\in M, a_i<j\le c_i\},
$$
where the $\diamond$ operation is defined as follows:
$$
x^a\diamond x_i^j:=x_1^{a_1}\cdots x_{i-1}^{a_{i-1}}x_i^j x_{i+1}^{a_{i+1}}\cdots x_m^{a_m}.
$$
\end{definition}

\begin{remark}
For any proper $c$-multicomplex $M$, the following crucial results were obtained in~\cite{Mu}:
\begin{itemize}
\item $\Bier_c(M)\cong\Bier_c(M^\vee)$ is a simplicial $(|c|-2)$-sphere, see~\cite[Proposition 1.10, Corollary 3.9]{Mu};
\item for $c=(1,\ldots,1)$, one has an isomorphism $\Bier_c(M)\cong\Bier(K_M)$, see~\cite[Theorem 1.13]{Mu};
\item $\Bier_c(M)$ is shellable (\cite[Theorem 2.1]{Mu}) and edge decomposable (\cite[Theorem 4.6]{Mu}).  
\end{itemize}
\end{remark}

Let us describe the set $\min(\Bier_c(M))$ of minimal non-faces of a Murai sphere $\Bier_c(M)$ in terms of its Stanley--Reisner ideal.
First, we define two polarizations for a $\bar{c}$-ideal in $S$; they will both belong to the polynomial algebra $\Bbbk[X]$ on the set of variables 
$$
X := X_1\cup X_2\cup\ldots\cup X_m,\text{ where }X_i := \{x_{i,0},\ldots,x_{i,c_i}\}\text{ for }1\leq i\leq m.
$$

\begin{definition}
The \emph{polarization} of a $\bar{c}$-ideal $I\subset S$ is the monomial ideal $\pol(I)$ in $\Bbbk[X]$ 
such that
$$
\pol(I):=(\pol(x^a)\,|\,x^a\in G(I))\subset\Bbbk[X], \text{ where }
\pol(x^a):=\prod\limits_{a_i\neq 0}x_{i,0}\ldots x_{i,a_{i}-1}.
$$

The $\ast$-\emph{polarization} of a $\bar{c}$-ideal $I\subset S$ is the monomial ideal $\pol^\ast(I)$ in $\Bbbk[X]$ such that
$$
\pol^\ast(I):=(\pol^\ast(x^a)\,|\,x^a\in G(I))\subset\Bbbk[X], \text{ where }\pol^\ast(x^a):=\prod\limits_{a_i\neq 0}x_{i,c_i}\ldots x_{i,c_i-a_{i}+1}.
$$
\end{definition}

The next statement, see~\cite[Theorem 3.6]{Mu}, describes the face ideal of $\Bier_c(M)$ and therefore the set $\min(\Bier_c(M))$ of its minimal non-faces.

\begin{theorem}\label{FaceIdealMuraiThm}
Let $M$ be a proper $c$-multicomplex. Then the Stanley--Reisner ideal of the Murai sphere $\Bier_c(M)$ is equal to the sum of the three ideals:
$$
I_{SR}(\Bier_c(M))=\pol(I_c(M))+\pol^\ast(I_c(M^\vee))+\pol(x_1^{c_{1}+1},\ldots,x_{m}^{c_{m}+1}).
$$
\end{theorem}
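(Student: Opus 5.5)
The plan is to identify each vertex $\gen{x}{i}{j}$ of $\Bier_c(M)$ with the variable $x_{i,j}\in\Bbbk[X]$. A squarefree monomial then corresponds to a subset $\sigma\subseteq\tilde X$. The statement says: $\sigma$ is a non-face of $\Bier_c(M)$ if and only if $\sigma$ contains the support of some generator of one of the three ideals. Since $\Bier_c(M)$ is given by its facets, $\sigma$ is a face if and only if $\sigma\subseteq G(x^b;x_i^j)$ for some facet. Equivalently, the complement $\tau=\tilde X\setminus\sigma$ contains $\{\gen{x}{1}{b_1},\ldots,\gen{x}{m}{b_m},\gen{x}{i}{j}\}$ for some $x^b\in M$ and some $b_i<j\le c_i$ with $x^b\diamond x_i^j\notin M$. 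We prove the two inclusions separately.

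\emph{Generators are non-faces.} We treat the three families of generators in turn.
\begin{itemize}
\item For $\pol(x_i^{c_i+1})$ the support is all of $\tilde X_i$. Every facet omits $\gen{x}{i}{b_i}$, so no facet contains $\tilde X_i$.
\item For $\pol(x^a)$ with $x^a\in\min(M)=G(I_c(M))$ (Proposition~\ref{MinimalNonMonmomialsProp}), suppose the facet $G(x^b;x_i^j)$ contains $\{\gen{x}{k}{0},\ldots,\gen{x}{k}{a_k-1}\}$ for all $k$. Then $b_k\ge a_k$ for every $k$, so $x^a\mid x^b\in M$. This contradicts $x^a\notin M$.
\item For $\pol^\ast(x^a)$ with $x^a\in\min(M^\vee)$, a facet containing $\{\gen{x}{k}{c_k-a_k+1},\ldots,\gen{x}{k}{c_k}\}$ must have $b_k\le c_k-a_k$ for all $k$, and also $j\le c_i-a_i$. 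Hence $x^b\diamond x_i^j$ divides $(x^a)^c$. Since $x^a\notin M^\vee$ we have $(x^a)^c\in M$, so $x^b\diamond x_i^j\in M$, again a contradiction.
\end{itemize}

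\emph{Sets avoiding all generators are faces.} Let $\sigma$ contain no such support. Since $\sigma$ avoids the supports $\tilde X_k$, the set $\tau$ meets every $\tilde X_k$. Put
$$b_k=\min\{t:\gen{x}{k}{t}\in\tau\},\qquad d_k=\max\{t:\gen{x}{k}{t}\in\tau\},$$
so that $b\le d$ componentwise.
\begin{itemize}
\item The set $\sigma$ contains $\gen{x}{k}{0},\ldots,\gen{x}{k}{b_k-1}$ for each $k$. If $x^b\notin M$, some minimal non-element $x^a\mid x^b$ would have $\operatorname{supp}\pol(x^a)\subseteq\sigma$, which is excluded. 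Hence $x^b\in M$.
\item Dually, $\sigma$ contains $\gen{x}{k}{d_k+1},\ldots,\gen{x}{k}{c_k}$. Note that $\operatorname{supp}\pol^\ast(x^a)\subseteq\sigma$ if and only if $x^a\mid x^{c-d}$. So no minimal non-element of $M^\vee$ divides $x^{c-d}$, hence $x^{c-d}\in M^\vee$, which means $x^d\notin M$.
\end{itemize}

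The step I expect to be the main obstacle is producing the required facet. I would build a chain from $x^b$ to $x^d$, raising one coordinate at a time, and each time only to the next exponent $t$ with $\gen{x}{k}{t}\in\tau$. Every monomial in the chain then has all its exponents indexed by vertices of $\tau$. The chain starts in $M$ and ends outside $M$, so there is a first step leaving $M$: a monomial $x^{b'}\in M$ and a value $j>b'_i$ with $\gen{x}{i}{j}\in\tau$ and $x^{b'}\diamond x_i^j\notin M$. Then $G(x^{b'};x_i^j)$ is a facet whose complement lies in $\tau$, so $\sigma\subseteq G(x^{b'};x_i^j)$ is a face. The care needed here is only in checking that every exponent along the chain, including $j$, indexes a vertex of $\tau$; this holds by construction.
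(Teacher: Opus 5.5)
Your proof is correct and complete. Note that the paper gives no proof of this statement: it quotes it as Murai's Theorem~3.6, and it takes Murai's facet description (his Proposition~1.10) as the definition of $\Bier_c(M)$. So your double-inclusion argument is an independent, self-contained derivation of the face-ideal formula from exactly the definition the paper adopts, rather than a variant of an argument in the paper.

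The first inclusion is a direct check on the three families of generators. It uses only two facts: $M$ is closed under divisibility, and $x^a\notin M^\vee$ if and only if $(x^a)^c\in M$.

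The reverse inclusion rests on your one real idea. From a set $\sigma$ that avoids all generator supports, you extract $x^b\in M$ and $x^d\notin M$ with $b\le d$, where every exponent indexes a vertex of the complement $\tau$. Any monotone path from $x^b$ to $x^d$ through such exponents must leave $M$ at some pair $x^{b'}\in M$, $x^{b'}\diamond x_i^j\notin M$. That pair is exactly a facet-defining pair whose omitted vertices lie in $\tau$. You could even shorten the path by jumping each coordinate directly from $b_k$ to $d_k$, since both values index vertices of $\tau$.

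The citation buys brevity. Your argument shows that the formula follows elementarily from the facet list, without relying on Murai's framework. That matters because the paper's chordality and stackedness arguments use the formula repeatedly.
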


Here is a couple of examples showing that a generalized Bier sphere of a $c$-multicomplex $M$ depends on both the integer vector $c$ and the multicomplex $M$.

\begin{example}
Let $c=(1,1,1)$ and
$M = \{1,x,y,z,xy,xz\}$.
Then $M^\vee = \{1,x\}$
and $\Bier_c(M) = \Bier(K_M) = \partial P_4$, the boundary of a 4-gon. Let $c=(2,1,1)$ and
$M = \{1,x,y,z,xy,xz\}$.
Then $M^\vee = \{1,x,y,z,x^2,yz\}$
and $\Bier_c(M) = \Bier(K_M\sqcup \varnothing)$, that is a flag Bier sphere over $K_M$ with one ghost vertex added.
\end{example}

\begin{example}
Let $c=(2,1)$ and
$M = \{1, x, x^2, y\}$.
Then $M^\vee = \{1, x\}$
and $\Bier_c(M) = \Bier(\ast \sqcup \varnothing \sqcup \varnothing)$, that is, a flag Bier sphere over a singleton with two ghost vertices added. Let $c=(2,2)$ and $M = \{1, x, x^2, y\}$. Then $M^\vee = \{1, x, y, xy, x^2\}$
and $\Bier_c(M) = \Bier(\ast \sqcup \ast \sqcup \varnothing \sqcup \varnothing)$, that is, a non-flag Bier sphere over two disjoint vertices with two ghost vertices added.
\end{example}

Until the end of this section, we shall discuss the Murai spheres $\Bier_c(M)$, where $M$ is a proper $c$-multicomplex and $c\in\N^m$ with $m=1$. Our first goal is to show that the class of Bier spheres is a proper subclass of the class of Murai spheres. Then we shall explore the relation between Bier spheres and Murai spheres with $m=1$.

\begin{example}\label{MuraiWithMeq1Example}
Let $m=1$ and $c\in\N$. Every proper $c$-multicomplex is of the form $M_a=\{1,\ldots,x^a\}=\langle x^a\rangle$, where $0\le a<c$. Consider the following three cases.

\begin{itemize}
\item If $a=0$, then $\Facet_c(M)=\{\{\gen x{}1,\ldots,\gen x{}{i-1},\gen x{}{i+1},\ldots,\gen x{}c\}\mid 1\le i\le c\}$, so $\Bier_c(M_0)$ is a boundary of a simplex $\Delta_{\{1,\ldots,c\}}$.

\item If $a=c-1$, then $\Facet_c(M)=\{\{\gen x{}0,\ldots,\gen x{}{i-1},\gen x{}{i+1},\ldots,\gen x{}{c-1}\}\mid 0\leq i\leq c-1\}$, so $\Bier_c(M_{c-1})$ is a boundary of a simplex $\Delta_{\{0,\ldots,c-1\}}$.

\item If $1\le a\le c-2$, then $\Facet_c(M)=\{\{\gen x{}0,\ldots,\gen x{}{i-1},\gen x{}{i+1},\ldots,\gen x{}{j-1},\gen x{}{j+1}\ldots,\gen x{}{c}\}\mid 0\le i\le a< j\le c\}$, hence $\Bier_c(M_a)$ is a join of two spheres, one is a boundary of a simplex $\Delta_{\{0,\ldots,a\}}$ and the other is a boundary of a simplex $\Delta_{\{a+1,\ldots,c\}}$.
\end{itemize}
\end{example}

Observe that if $c\in\N, c\geq 3, 1\leq a\leq c-2$, the Murai sphere $\Bier_c(M)$ in the above example is a join of the boundary of a simplex with $a+1$ vertices and the boundary of a simplex with $c-a$ vertices; both numbers could be made arbitrarily large when $c$ is large enough. Hence, any join of two boundaries of simplices is a Murai sphere. On the other hand, one can completely describe the case where a classical Bier sphere is isomorphic to a join of two boundaries of simplices. We obtain the following two results.

\begin{lemma}\label{BierWithCompleteGraphLemma}
Let $K\neq\Delta_{[m]}$ be a simplicial complex with $m\geq 2$. Then the 1-skeleton of $\Bier(K)$ is a complete graph if and only if either $K$ or $K^\vee$ has no vertices.    
\end{lemma}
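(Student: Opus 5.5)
We work with the standard description of the Bier sphere as a deleted join. The vertex set of $\Bier(K)$ consists of the vertices $i$ with $\{i\}\in K$ and the vertices $\bar i$ with $\{i\}\in K^\vee$. A set $\sigma\sqcup\bar\tau$ with $\sigma\cap\tau=\varnothing$ is a face if and only if $\sigma\in K$ and $\tau\in K^\vee$. Recall that $\tau\in K^\vee$ exactly when $[m]\setminus\tau\notin K$. Equivalently, this is Theorem~\ref{FaceIdealMuraiThm} specialized to $c=(1,\ldots,1)$: the minimal non-faces of $\Bier(K)$ are the minimal non-faces of $K$ on the unbarred vertices, the minimal non-faces of $K^\vee$ on the barred vertices, and the pairs $\{i,\bar i\}$. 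The last pairs come from $\pol(x_i^2)=x_{i,0}x_{i,1}$. So the $1$-skeleton is complete if and only if none of these minimal non-faces is an edge.

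\emph{The ``if'' direction.} Suppose $K^\vee$ has no vertices, so there are no barred vertices. Then $\Bier(K)$ restricted to the unbarred side is just $K$. Since $K^\vee=\{\varnothing\}$, every proper subset of $[m]$ lies in $K$, so $K=\partial\Delta_{[m]}$. For $m\ge 3$ every pair is an edge, and for $m=2$ we get $S^0$, whose graph on two vertices must be handled separately. The case $m=2$ should be checked directly (for instance, $K=\{\varnothing,\{1\},\{2\}\}$ gives two isolated vertices). Presumably the intended convention covers this, or the statement implicitly assumes $m\ge 3$. I will note this edge case and otherwise proceed. The case where $K$ has no vertices is symmetric, using $\Bier(K)\cong\Bier(K^\vee)$.

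\emph{The ``only if'' direction.} Suppose some $i$ is a vertex of $K$ and some $j$ is a vertex of $K^\vee$. If $i=j$, then $\{i,\bar i\}$ is a non-edge between two existing vertices, and we are done. So assume that for every such choice $i\neq j$; the goal is again to produce a missing edge. Since $K^\vee$ has the vertex $j$, we know $[m]\setminus\{j\}\notin K$. Consider the pair $\{i,\bar j\}$: it is a face exactly when $\{i\}\in K$ and $\{j\}\in K^\vee$, and these hold, so this pair is an edge. Hence the missing edges must be found elsewhere.

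\emph{Key step: the vertex partition.} The vertices split into $V=\{i:\{i\}\in K\}$ and $W=\{j:\{j\}\in K^\vee\}$. A mixed pair $\{i,\bar j\}$ is an edge exactly when $i\ne j$, so if $V\cap W\ne\varnothing$ we are done. If $V\cap W=\varnothing$, then every $k\notin V$ satisfies $\{k\}\notin K$, hence $[m]\setminus\{k\}\in K^\vee$ and so (for $m\ge2$) $k\in W$. Thus $V\sqcup W=[m]$. Completeness now requires $V$ to be a clique in $K$ and $W$ a clique in $K^\vee$. In particular $V\in\sk_1$-sense: all pairs in $V$ lie in $K$, and all pairs in $W$ lie in $K^\vee$. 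The main obstacle is deriving a contradiction here, since the pairwise conditions alone give no immediate clash between $K$ and $K^\vee$.

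\emph{Resolving the obstacle.} I would first look for a set that is forced both into and out of a complex. Since $W\ne\varnothing$ and $W\cap V=\varnothing$, no element of $W$ is a vertex of $K$, so $V$ contains every vertex of $K$; every face of $K$ is then a subset of $V$. Take $j\in W$. The set $[m]\setminus\{j\}\supseteq V$ does not lie in $K$, which is consistent and gives no contradiction yet. Pick $w\in W$ and $v\in V$ and test whether $\{w,w'\}\in K^\vee$, i.e.\ whether $[m]\setminus\{w,w'\}\notin K$. This holds precisely when $V\not\subseteq$ some face, i.e.\ when $V\notin K$ or $|W|\ge3$. Similarly, $\{v,v'\}\in K$ is a statement about the pair itself. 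Working through these, $\Bier(K)$ would be the join of $\partial$-type pieces, and I expect the counterexample $K=\partial\Delta_V$ with $|V|\ge3$ and $|W|\ge 3$ to have a complete $1$-skeleton. If so, the lemma as stated needs extra hypotheses, and I would recheck it carefully in the case $V\sqcup W=[m]$ before finalizing. The approach would be the case analysis above, with the reduction to $V\sqcup W=[m]$ as the main step.
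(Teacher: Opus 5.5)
There is a genuine gap in your key step, and it is what leads you to doubt the lemma. From $k\notin V$, i.e.\ $\{k\}\notin K$, you correctly get $[m]\setminus\{k\}\in K^\vee$. Since $K^\vee$ is closed under subsets, this says that every $j\neq k$ is a vertex of $K^\vee$, i.e.\ $[m]\setminus\{k\}\subseteq W$. It says nothing about $k$ itself, so the partition $V\sqcup W=[m]$ does not follow. The correct consequence is much stronger and finishes the argument at once. Combined with $V\cap W=\varnothing$, it gives $V\subseteq\{k\}$, and since $k\notin V$, $V=\varnothing$, so $K$ has no vertices. If no such $k$ exists, then $V=[m]$, and disjointness gives $W=\varnothing$, so $K^\vee$ has no vertices. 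This is exactly the paper's proof. Completeness forces each $i$ to be a ghost vertex of $K$ or of $K^\vee$, since otherwise $\{i,\bar i\}$ is a missing edge. One ghost vertex $m$ of $K$ then puts $\{\bar 1,\dots,\overline{m-1}\}$ into $K^\vee$, which forces $1,\dots,m-1$ to be ghosts of $K$ as well.

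Your suspected counterexample also fails. Take $K=\partial\Delta_V$ on $[m]$ with $V\subsetneq[m]$. Each $i\in V$ is a vertex of $K^\vee$, because $[m]\setminus\{i\}$ contains a ghost vertex of $K$ and so is not a face of $K$. Hence $\{i,\bar i\}$ is a non-edge. In fact, the configuration $V\sqcup W=[m]$ with both parts nonempty never occurs. Your remark on $m=2$ is correct, though: there $\Bier(K)$ is two non-adjacent points. So the direction $\Leftarrow$, which the paper calls clear, genuinely needs $m\geq 3$, while $\Rightarrow$ holds vacuously.
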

\begin{proof}
The implication $\Leftarrow$ is clear, so let us prove the implication $\Rightarrow$. If the 1-skeleton of $\Bier(K)$ is a complete graph, then either $i\in\min(K)$, or $i'\in\min(K^\vee)$ for each $i, 1\leq i\leq m$; otherwise $\{i,i'\}\in\min(\Bier(K))$. 

We can assume that $m\in\min(K)$. By Alexander duality, $\{m'\}^c=\{1',2',\ldots,(m-1)'\}\in K^\vee$. Then $i\in\min(K)$ for each $i, 1\leq i\leq m-1$. Thus, $K=\varnothing_{[m]}$, which finishes the proof.
\end{proof}

\begin{proposition}
Let $K\neq\Delta_{[m]}$ be a simplicial complex with $m\geq 2$. Then $\Bier(K)=\partial\Delta^{a}\ast\partial\Delta^{b}$ if and only if one of the following conditions holds:
\begin{itemize}
\item either $a$, or $b$ equals $0$ and either $K=\varnothing_{[m]}$ or $K=\partial\Delta_{[m]}$;
\item either $a\geq 2, b=1$, or $a=1, b\geq 2$ and either $K$ has a unique real vertex or $K=\cone(\partial\Delta_{[m-1]})$;
\item $a=b=1$ and either $K=\langle\{1\}\rangle$, or $K=\langle\{1,2\}\rangle$, or $K=\langle\{1,2\},\{2,3\}\rangle$ with $m=3$.
\end{itemize}    
\end{proposition}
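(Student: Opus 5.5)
We first fix conventions. $\partial\Delta^a$ denotes the boundary of the $a$-simplex, so it has $a+1$ vertices; $\partial\Delta^0=\varnothing$ is the sphere of dimension $-1$, the unit for join. Then $\partial\Delta^a\ast\partial\Delta^b$ is a sphere of dimension $a+b-2$ on $a+b+2$ vertices. Since $\Bier(K)$ has dimension $m-2$, we need $a+b=m$. The number of vertices of $\Bier(K)$ is $2m$ minus the number of ghost vertices of $K$ and of $K^\vee$. Here a ghost vertex of $K$ is an $i$ with $\{i\}\in\min(K)$, and a ghost vertex of $K^\vee$ is an $i'$ with $\{i'\}\in\min(K^\vee)$. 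So $\Bier(K)$ has $m+2$ vertices exactly when $K$ and $K^\vee$ together have $m-2$ ghost vertices.

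The plan splits into cases according to how many minimal non-faces the target has. The sphere $\partial\Delta^a\ast\partial\Delta^b$ has exactly two minimal non-faces when $a,b\geq 1$, namely the two vertex sets. When one of $a,b$ is zero it is $\partial\Delta^{m}$, with a single minimal non-face. We compare this with the Stanley--Reisner ideal of $\Bier(K)$. By Theorem~\ref{FaceIdealMuraiThm} with $c=(1,\ldots,1)$, this ideal is generated by three kinds of minimal non-faces:
\begin{itemize}
\item the minimal non-faces of $K$;
\item the primed copies of the minimal non-faces of $K^\vee$;
\item the pairs $\{i,i'\}$ that are not already covered by a ghost vertex.
\end{itemize}

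\textbf{Case $a=0$ or $b=0$.} The $1$-skeleton is complete, so Lemma~\ref{BierWithCompleteGraphLemma} gives that $K$ or $K^\vee$ has no vertices. If $K$ has no vertices, then $K=\varnothing_{[m]}$ and $\Bier(K)=\partial\Delta_{[m']}$. If $K^\vee$ has no vertices, then dually $K=\partial\Delta_{[m]}$. The converse is immediate.

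\textbf{Case $a,b\geq 1$.} Counting vertices, $K$ and $K^\vee$ together have exactly $m-2$ ghost vertices. So exactly two indices $i$ have both $i$ real in $K$ and $i'$ real in $K^\vee$, and each such index contributes the missing edge $\{i,i'\}$, which is a minimal non-face. The target has only two minimal non-faces, and they partition the vertex set. Hence these two pairs must be exactly the two minimal non-faces. This forces $a=b=1$, with vertex set $\{i,i',j,j'\}$ and $m=2$ real indices in total.

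For the case $a\geq 2$, $b=1$ the mechanism is different. Exactly one index $i$ has both $i$ and $i'$ real, and $\{i,i'\}$ is the minimal non-face $\partial\Delta^1$. Every other index is a ghost on exactly one side. The remaining vertices form the other minimal non-face, and it must contain no further minimal non-face from $K$ or $K^\vee$.

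So the core step is to classify $K$ for which exactly one index (respectively two indices) is real on both sides, and all other minimal non-faces merge into a single set. We handle this by normalising. Since $\Bier(K)\cong\Bier(K^\vee)$, we may reduce to a canonical side. Ghost vertices of $K$ correspond to facets $[m]\setminus\{i\}$ of $K$ missing from the dual, so each index is real in $K$ or real in $K^\vee$ but, apart from the special indices, not both.

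For $b=1$: suppose $K$ has a unique real vertex $i$. Then $K=\{\varnothing,\{i\}\}$, and $K^\vee$ contains every set missing some element other than $i$. We then verify the result is $\partial\Delta^1\ast\partial\Delta^{m-1}$. The dual situation gives $K=\cone(\partial\Delta_{[m-1]})$. Conversely, we must show that no other $K$ works. If $K$ has at least two real vertices that are ghosts in $K^\vee$, the non-faces from $K$ and the primed non-faces of $K^\vee$ give at least two distinct minimal non-faces besides $\{i,i'\}$. This contradicts there being only one.

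For $a=b=1$, we have $m\le 3$ after discarding ghosts on the appropriate side, and a short enumeration yields the three listed complexes, up to relabeling and ghost vertices.

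The main obstacle is the "only if" direction in the case $a\geq2,b=1$. We must show that all minimal non-faces of $K$ and of $K^\vee$ combine into a single set. We would do this via Proposition~\ref{MinimalNonMonmomialsProp}, which identifies $\min(K^\vee)$ with the complements of facets of $K$. The condition then becomes: $K$ has a unique minimal non-face among its real vertices and $K^\vee$ has none, or vice versa. This forces the stated forms of $K$.
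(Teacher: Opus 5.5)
Your proof has a genuine gap, and it begins with an off-by-one error that propagates through the main case. A facet of $\partial\Delta^a\ast\partial\Delta^b$ has $a+b$ vertices, so this sphere has dimension $a+b-1$, not $a+b-2$. Hence $a+b=m-1$, and for $a,b\geq 1$ the target has $m+1$ vertices: $K$ and $K^\vee$ have $m-1$ ghost vertices in total, not $m-2$. Your own Case $a=0$, where $\Bier(\varnothing_{[m]})=\partial\Delta_{[m']}$ has $m$ vertices, already contradicts your formula.

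From the wrong count, your Case $a,b\geq 1$ deduces that exactly two indices are real on both sides, and hence $a=b=1$. Taken literally, this rules out the entire second bullet. Your next paragraph then asserts ``exactly one index'' for $a\geq 2$, $b=1$, which contradicts it. Beyond the count, there are four further problems:
\begin{itemize}
\item The count ignores indices that are ghosts on \emph{both} sides. Both $i$ and $i'$ are ghosts iff $\{i\}\notin K$ and $[m]\setminus\{i\}\in K$, i.e.\ iff $K=2^{[m]\setminus\{i\}}$. This is exactly how $K=\langle\{1,2\}\rangle$ with $m=3$ arises, so your claim that every non-special index is real on exactly one side fails there.
\item For $a=b=1$ it is false that both blocks are pairs $\{j,j'\}$: for $K=\langle\{1\}\rangle$ on $[3]$ the two minimal non-faces are $\{1,1'\}$ and $\{2',3'\}$.
\item Your uniqueness argument for $b=1$ is wrong as stated. $K=\cone(\partial\Delta_{[m-1]})$ has $m-1\geq 2$ real vertices that are ghosts in $K^\vee$, yet only one minimal non-face besides $\{i,i'\}$.
\item The $a=b=1$ enumeration and the closing ``this forces the stated forms of $K$'' are asserted, not carried out.
\end{itemize}

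The strategy itself is sound. Once repaired, it gives a self-contained proof that differs from the paper's, which quotes \cite[Lemma~3.3]{LZ} when $a$ or $b$ is $0$ or $1$ and excludes $a,b\geq 2$ via Lemma~\ref{BierWithCompleteGraphLemma}. Let $r$ and $q$ be the numbers of indices that are real on both sides and ghost on both sides. Then $f_0(\Bier(K))=m+r-q$, and $q\leq 1$, with $q=1$ only for $K=2^{[m]\setminus\{i\}}$, where $r=m-1$.

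If $a,b\geq 1$, then $r-q=1$. Either $q=1$, forcing $m=3$ and $K=\langle\{1,2\}\rangle$ up to relabeling; or $q=0$ and $r=1$. In the latter case the unique pair $\{i,i'\}$ is a two-element minimal non-face, hence one of the two blocks, so (say) $b=1$; this alone excludes $a,b\geq 2$. The other block $A$ contains exactly one of $j,j'$ for each $j\neq i$. Every minimal non-face of $\Bier(K)$ is entirely unprimed, entirely primed, or a pair $\{j,j'\}$, so $A$ is homogeneous; this homogeneity, not a count of ghosts, is the right criterion.
\begin{itemize}
\item If $A$ is unprimed, then $[m]\setminus\{j\}\in K$ for all $j\neq i$ and $[m]\setminus\{i\}\notin K$, so $K=\cone(\partial\Delta_{[m]\setminus\{i\}})$.
\item If $A$ is primed, then $K=\{\varnothing,\{i\}\}$.
\end{itemize}
For $m=3$ these are the other two complexes of the third bullet.

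The same bookkeeping for $a=0$ gives $r=q$. For $m\geq 3$ this forces $r=q=0$, and homogeneity of the single block gives $\varnothing_{[m]}$ or $\partial\Delta_{[m]}$. It also shows that your step ``the $1$-skeleton is complete'' (like Lemma~\ref{BierWithCompleteGraphLemma}) needs $m\geq 3$. For $m=2$, $r=q=1$ occurs for $K=\langle\{1\}\rangle$, whose Bier sphere is $S^0=\partial\Delta^0\ast\partial\Delta^1$. The first bullet as printed does not list this case.
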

\begin{proof}
Recall that for any simplicial complex $L$, one has:
$$
L\ast\partial\Delta^{0}=L\text{ and }L\ast\partial\Delta^{1}=\Sigma L.
$$
Then \cite[Lemma 3.3]{LZ} shows when a Bier sphere can be isomorphic to a join of a simplicial complex and $\partial\Delta^{d}$ with $d=0$, or $1$. Now suppose $a, b\geq 2$. Then the 1-skeleton of $\Bier(K)$ is a complete graph, and hence, by Lemma~\ref{BierWithCompleteGraphLemma}, $\Bier(K)$ is isomorphic to the boundary of a simplex with $m$ vertices, a contradiction. 
\end{proof}

Note that Lemma~\ref{BierWithCompleteGraphLemma} shows $\Bier(K)$ is \emph{2-neighborly}, that is, any two vertices are linked by an edge, if and only if either $K$ or $K^\vee$ has no vertices; in this case, the 1-skeleton of $\Bier(K)$ is isomorphic to the complete graph $K_m$. In particular, any complete graph can be realized as a 1-skeleton of a Bier and hence a Murai sphere. However, the dimension of such a Bier sphere always equals $m-2$.

Moreover, Example~\ref{MuraiWithMeq1Example} suggests that we could also get a complete graph
$K_{c+1}$ as the 1-skeleton of a Murai sphere if we take $m=1$ and $1\leq a\leq c-2$ with $a+1\geq
3$ and $c-a\geq 3$. It follows that here the dimension of the Murai sphere equals $c-2\geq 3$; the lowest possible case is clearly $a=2, c=5$. 

A question arises naturally: given an integer $d\geq 2$, find all $r$ such that the complete graph $K_r$ arises as the 1-skeleton of a $d$-dimensional Murai sphere. This problem will be addressed in our subsequent publications.

\section{Classification in low dimensions}

In this section, we are going to classify all generalized Bier spheres in dimensions 1 and 2. In what follows, we use the notation of the form $I\sqcup\ast\sqcup\varnothing$ for a simplicial complex on $[4]$ having one ghost vertex, one maximal edge and one maximal vertex, disjoint from each other.

It is easy to classify all 1-dimensional Murai spheres $\Bier_c(M)$; indeed, this is the case if and only if either $c=(3)$, or $c=(2,1)$, or $c=(1,1,1)$. In what follows, we use the notation $Z_p$ for the $p$-cycle graph.

\begin{figure}
\includegraphics[scale=0.65]{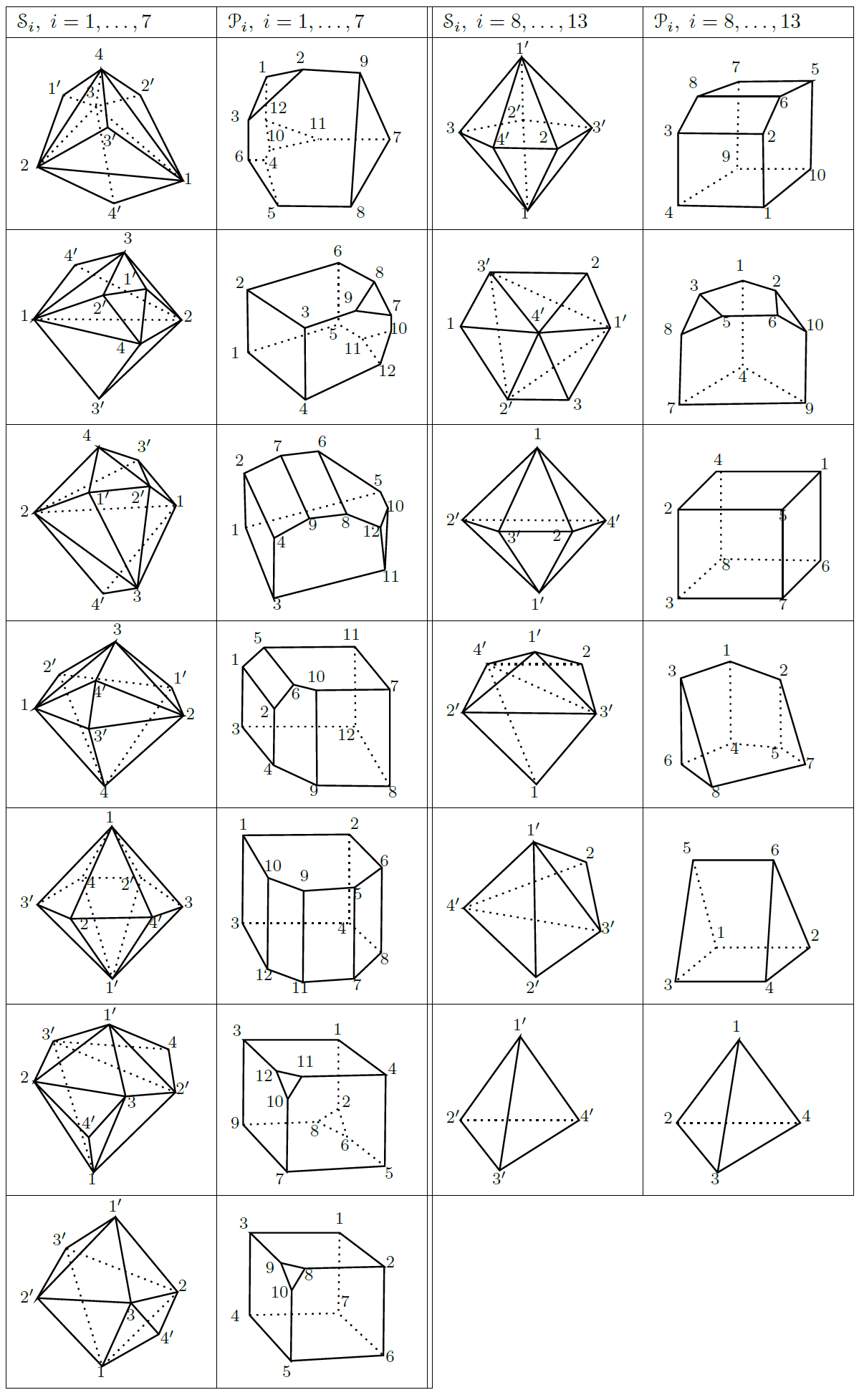}
\caption{The two-dimensional Murai spheres $\mathcal S_i$ with $c=(1,1,1,1)$ and the corresponding simple polytopes $\mathcal P_i$ such that $\mathcal S_i=\partial\mathcal P_i^{*}$ for $1\leq i\leq 13$.}
\label{TwoDimBierFig}
\end{figure}

\begin{example}
Let $c=(3)$. Then one has the following spheres:
\begin{itemize}
\item[(1)] $M=\langle 1\rangle$ and $M^\vee=\langle x^2\rangle$. Then
\(
\Bier_c(M)=\Bier_c(M^\vee)\cong\Bier(\varnothing_{[3]})=Z_3.
\)
\item[(2)] $M=\langle x\rangle=M^\vee$. Then
\(
\Bier_c(M)=\Bier_c(M^\vee)\cong\Bier(\ast\sqcup\varnothing\sqcup\varnothing)=Z_4.
\)
\end{itemize}
Let $c=(2,1)$. Then one has the following spheres:
\begin{itemize}
\item[(1)] $M=\langle 1\rangle$ and $M^\vee=\langle x^2, xy\rangle$. Then 
$\Bier_c(M)=\Bier_c(M^\vee)\cong\Bier(\varnothing_{[3]})=Z_3$. 
\item[(2)] $M=\langle x\rangle$ and $M^\vee=\langle x^2, y\rangle$. Then
$\Bier_c(M)=\Bier_c(M^\vee)\cong\Bier(\ast\sqcup \varnothing\sqcup \varnothing)=Z_4$.
\item[(3)] $M=\langle y\rangle$ and $M^\vee=\langle xy\rangle$. Then 
$\Bier_c(M)=\Bier_c(M^\vee)\cong\Bier(\ast\sqcup\varnothing\sqcup\varnothing)=Z_4$. 
\item[(4)] $M=\langle x, y\rangle=M^\vee$. Then 
$\Bier_c(M)=\Bier_c(M^\vee)\cong\Bier(\ast\sqcup\ast\sqcup\varnothing)=Z_5$.
\item[(5)] $M=\langle x^2\rangle=M^\vee$. Then 
$\Bier_c(M)=\Bier_c(M^\vee)\cong\Bier(\varnothing_{[3]})=Z_3$.
\end{itemize}
Let $c=(1,1,1)$. Then one has the following spheres:
\begin{itemize}
\item[(1)] $M=\langle 1\rangle$ and $M^\vee=\langle xy, xz, yz\rangle$. Then 
$\Bier_c(M)=\Bier_c(M^\vee)\cong\Bier(\varnothing_{[3]})=Z_3$.
\item[(2)] $M=\langle x\rangle$ and $M^\vee=\langle xy, xz\rangle$. Then 
$\Bier_c(M)=\Bier_c(M^\vee)\cong\Bier(\ast\sqcup\varnothing\sqcup\varnothing)=Z_4$.
\item[(3)] $M=\langle xy\rangle=M^\vee$. Then 
$\Bier_c(M)=\Bier_c(M^\vee)\cong\Bier(\ast\sqcup\varnothing\sqcup\varnothing)=Z_4$.
\item[(4)] $M=\langle x, y\rangle$ and $M^\vee=\langle z, xy\rangle$. Then 
$\Bier_c(M)=\Bier_c(M^\vee)\cong\Bier(\ast\sqcup\ast\sqcup\varnothing)=Z_5$.
\item[(5)] $M=\langle x, y, z\rangle=M^\vee$. Then
$\Bier_c(M)=\Bier_c(M^\vee)\cong\Bier(\ast\sqcup\ast\sqcup\ast)=Z_6$.
\end{itemize}
\end{example}

Thus, all possible one-dimensional Murai spheres are listed above. 

\begin{proposition}
The only 1-dimensional Murai spheres are the boundaries of a triangle, a square, a pentagon, or a hexagon.
\end{proposition}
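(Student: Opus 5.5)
The proof reduces the statement to a finite enumeration, most of which is already done in the example preceding it. By the Remark after the definition of Murai spheres, $\Bier_c(M)$ is a simplicial $(|c|-2)$-sphere. So a $1$-dimensional Murai sphere forces $|c|=3$.

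Next I would bound the length of the cycle. A $1$-dimensional simplicial sphere is a cycle graph $Z_p$, with $p$ the number of non-ghost vertices. These are vertices of $\tilde X$, and $|\tilde X|=|c|+m$. Since $m\le |c|=3$, we get $p\le 6$, and of course $p\ge 3$. Hence the only candidates are the triangle, square, pentagon and hexagon.

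To finish, the listed cases must all occur, and nothing else may occur. The first step is to enumerate $c$: since $c_i\ge 1$ and $|c|=3$, up to permuting coordinates $c\in\{(3),(2,1),(1,1,1)\}$. Permuting coordinates only relabels vertices of $\tilde X$, so no generality is lost. For each such $c$, the example preceding the statement lists every proper $c$-multicomplex up to symmetry and up to the isomorphism $\Bier_c(M)\cong\Bier_c(M^\vee)$.

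Checking that list is exhaustive is routine:
\begin{itemize}
\item for $c=(3)$ the proper multicomplexes are $\langle 1\rangle,\langle x\rangle,\langle x^2\rangle$, and $\langle x^2\rangle=\langle 1\rangle^\vee$;
\item for $c=(2,1)$ and $c=(1,1,1)$ one lists the finitely many order ideals of the $c$-monomials and pairs each with its Alexander dual.
\end{itemize}

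In each case the example identifies $\Bier_c(M)$ with a classical Bier sphere, which is $Z_3$, $Z_4$, $Z_5$ or $Z_6$. This identification can be confirmed directly from $\Facet_c(M)$, or from the minimal non-faces given by Theorem~\ref{FaceIdealMuraiThm}. All four cycles appear: $Z_6$ arises for $c=(1,1,1)$ with $M=\langle x,y,z\rangle$. This proves the proposition.

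The only real obstacle is verifying that the enumeration of multicomplexes for $c=(1,1,1)$ and $(2,1)$ is complete. For the stated claim, however, completeness is not needed. The upper bound $p\le|c|+m\le 6$ already restricts the answer to the four polygons, and exhibiting one example of each gives realizability.
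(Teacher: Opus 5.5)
Your argument is correct, but for the exclusion direction it takes a different route from the paper.

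\textbf{The paper's route.} The paper proves the proposition purely by exhaustion. It runs through $c=(3),(2,1),(1,1,1)$ and lists every proper $c$-multicomplex up to symmetry and Alexander duality. It then identifies each $\Bier_c(M)$ with an explicit Bier sphere, one of $Z_3,\dots,Z_6$, and reads the proposition off from that table.

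\textbf{Your route.} You obtain the ``only'' part a priori. Murai's theorem makes $\Bier_c(M)$ a $(|c|-2)$-sphere on at most $|\tilde X|=|c|+m\le 2|c|=6$ vertices. A simplicial $1$-sphere is a cycle $Z_p$ with $p\ge 3$, so only four polygons are possible, and you need just one example of each.

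\textbf{What each buys.} Your route removes all case-checking, and with it any risk of an incomplete enumeration. Combined with the $c=(1,1,1)$ examples, it also already gives the first sentence of Theorem~\ref{MuraiClassificationTheorem}, since $Z_3,\dots,Z_6$ are all classical Bier spheres. The paper's enumeration gives a finer dictionary: which pairs $(c,M)$ produce which polygon. It is also a method that carries over to dimension $2$. There your counting bound ($f_0\le 8$) is far from enough to pin down the spheres, and genuinely non-Bier examples appear.
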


Now we turn to the class of Murai spheres $\Bier_c(M)$ in dimension two; this is the case if and only if either $c=(4)$, or $c=(3,1)$, or $c=(2,2)$, or $c=(2,1,1)$, or $c=(1,1,1,1)$. In what follows, for a convex simple polytope $P$ we consider its \emph{nerve complex} $K_P=\partial P^*$, the boundary of the dual simplicial polytope. Furthermore, if a simple polytope $P$ is combinatorially equivalent to a polytope obtained from the $n$-simplex $\Delta^n$ by consecutively cutting off $k$ vertices for $k\geq 0$ using hyperplanes in general position, we call $P$ a \emph{truncation polytope} of a type in $\vc^k(\Delta^n)$ and write $P\in\vc^k(\Delta^n)$. An $m$-gon $P_m$ is a $2$-dimensional truncation polytope for each $m\geq 3$. Nerve complex of a truncation polytope is called a \emph{stacked sphere}. It bounds a \emph{stacked polytope}, a simplicial $n$-polytope isomorphic to the one obtained by consecutively performing stellar subdivisions in facets (i.e. gluing pyramids over facets), starting from $\Delta^n$. Stacked Bier spheres were classified in~\cite{LV}. We shall consider stacked Murai spheres in the next section.

The case $c=(4)$ is covered by Example~\ref{MuraiWithMeq1Example}. This yields $\partial\Delta^3$ and $\partial\Delta^2\ast\partial\Delta^1$ as Murai spheres.

The case $c=(1,1,1,1)$ of classical Bier spheres was covered in~\cite[Theorem 2.16]{LS}, see Figure~\ref{TwoDimBierFig}.

\begin{example}
Let $c=(3,1)$. Then one has the following spheres:
\begin{itemize}
\item[(1)] $M=\langle 1\rangle$, $M^\vee=\langle x^2y,x^3\rangle$. Then
$\Bier_{c}(M)=\Bier_{c}(M^\vee)\cong\Bier(\varnothing_{[4]})=K_P$, $P=\Delta^3$;
\item[(2)] $M=\langle y\rangle$, $M^\vee=\langle x^2y\rangle$. Then
$\Bier_{c}(M)=\Bier_{c}(M^\vee)\cong\Bier(\cone(Z_3))=K_P$, $P=P_3\times I$;
\item[(3)] $M=\langle x\rangle$, $M^\vee=\langle xy,x^3\rangle$. Then
$\Bier_{c}(M)=\Bier_{c}(M^\vee)\cong\Bier(\cone(Z_3))=K_P$, $P=P_3\times I$;
\item[(4)] $M=\langle xy\rangle=M^\vee$. Then
$\Bier_{c}(M)=\Bier_{c}(M^\vee)\cong\Bier(\cone(Z_4))=K_P$, $P=I^3$;
\item[(5)] $M=\langle x^2\rangle$, $M^\vee=\langle x^3,y\rangle$. Then
$\Bier_{c}(M)=\Bier_{c}(M^\vee)\cong\Bier(\cone(Z_3))=K_P$, $P=P_3\times I$;
\item[(6)] $M=\langle x^3\rangle=M^\vee$. Then
$\Bier_{c}(M)=\Bier_{c}(M^\vee)\cong\Bier(\varnothing_{[4]})=K_P$, $P=\Delta^3$;
\item[(7)] $M=\langle x,y\rangle$, $M^\vee=\langle xy,x^2\rangle$. Then
$\Bier_c(M)=\Bier_c(M^\vee)\cong\Bier(\ast\sqcup\ast\sqcup\varnothing\sqcup\varnothing)=K_P$, $P\in\vc^2(\Delta^3)$;
\item[(8)] $M=\langle x^2,y\rangle=M^\vee$. Then
$\Bier_c(M)=\Bier_c(M^\vee)\cong\Bier(\ast\sqcup\ast\sqcup\varnothing\sqcup\varnothing)=K_P$, $P\in\vc^2(\Delta^3)$;
\end{itemize}
\end{example}

\begin{example}
Let $c=(2,2)$. Then one has the following spheres:
\begin{itemize}
\item[(1)] $M=\langle 1\rangle$, $M^\vee=\langle x^2y, xy^2\rangle$. Then
$\Bier_c(M)=\Bier_c(M^\vee)\cong\Bier(\varnothing_{[4]})=K_P$, $P=\Delta^3$;
\item[(2)] $M=\langle x\rangle$, $M^\vee=\langle x^2y,y^2\rangle$. Then
$\Bier_c(M)=\Bier_c(M^\vee)\cong\Bier(\cone(Z_3))=K_P$, $P=P_3\times I$;
\item[(3)] $M=\langle y\rangle$, $M^\vee=\langle xy^2,x^2\rangle$. Then
$\Bier_c(M)=\Bier_c(M^\vee)\cong\Bier(\cone(Z_3))=K_P$, $P=P_3\times I$;
\item[(4)] $M=\langle x,y\rangle$, $M^\vee=\langle x^2,xy,y^2\rangle$. Then
$\Bier_c(M)=\Bier_c(M^\vee)\cong\Bier(\ast\sqcup\ast\sqcup\varnothing\sqcup\varnothing)=K_P$, $P\in\vc^2(\Delta^3)$;
\item[(5)] $M=\langle xy\rangle$, $M^\vee=\langle x^2,y^2\rangle$. Then
$\Bier_c(M)=\Bier_c(M^\vee)\cong\Bier(\cone(Z_4))=K_P$, $P=I^3$;
\item[(6)] $M=\langle x^2\rangle$, $M^\vee=\langle x^2y\rangle$. Then
$\Bier_c(M)=\Bier_c(M^\vee)\cong\Bier(\cone(Z_3))=K_P$, $P=P_3\times I$;
\item[(7)] $M=\langle y^2\rangle$, $M^\vee=\langle xy^2\rangle$. Then
$\Bier_c(M)=\Bier_c(M^\vee)\cong\Bier(\cone(Z_3))=K_P$, $P=P_3\times I$;
\item[(8)] $M=\langle x,y^2\rangle$, $M^\vee=\langle xy,y^2\rangle$. Then
$\Bier_c(M)=\Bier_c(M^\vee)\cong\Bier(\ast\sqcup\ast\sqcup\varnothing\sqcup\varnothing)=K_P$, $P\in\vc^2(\Delta^3)$;
\item[(9)] $M=\langle x^2,y\rangle$, $M^\vee=\langle xy,x^2\rangle$. Then
$\Bier_c(M)=\Bier_c(M^\vee)\cong\Bier(\ast\sqcup\ast\sqcup\varnothing\sqcup\varnothing)=K_P$, $P\in\vc^2(\Delta^3)$.
\end{itemize}
\end{example}

\begin{example}
Let $c=(2,1,1)$. Then one has the following spheres:
\begin{itemize}
\item[(1)] $M=\langle 1\rangle$, $M^\vee=\langle x^2y,x^2z,xyz\rangle$. Then
$\Bier_c(M)=\Bier_c(M^\vee)\cong\Bier(\varnothing_{[4]})=K_P$, $P=\Delta^{3}$;
\item[(2)] $M=\langle x\rangle$, $M^\vee=\langle x^2y,x^2z,yz\rangle$. Then
$\Bier_c(M)=\Bier_c(M^\vee)\cong\Bier(\cone(Z_3))=K_P$, $P=P_3\times I$;
\item[(3)] $M=\langle y\rangle$, $M^\vee=\langle xyz,x^2y\rangle$. Then
$\Bier_c(M)=\Bier_c(M^\vee)\cong\Bier(\cone(Z_3))=K_P$, $P=P_3\times I$;
\item[(4)] $M=\langle z\rangle$, $M^\vee=\langle xyz,x^2z\rangle$. Then
$\Bier_c(M)=\Bier_c(M^\vee)\cong\Bier(\cone(Z_3))=K_P$, $P=P_3\times I$;
\item[(5)] $M=\langle xy\rangle$, $M^\vee=\langle x^2y,yz\rangle$. Then
$\Bier_c(M)=\Bier_c(M^\vee)\cong\Bier(\cone(Z_4))=K_P$, $P=I^3$;
\item[(6)] $M=\langle xz\rangle$, $M^\vee=\langle x^2z,yz\rangle$. Then
$\Bier_c(M)=\Bier_c(M^\vee)\cong\Bier(\cone(Z_4))=K_P$, $P=I^3$;
\item[(7)] $M=\langle yz\rangle$, $M^\vee=\langle xyz\rangle$. Then
$\Bier_c(M)=\Bier_c(M^\vee)\cong\Bier(\cone(Z_4))=K_P$, $P=I^3$;
\item[(8)] $M=\langle x^2\rangle$,  $M^\vee=\langle x^2y,x^2z\rangle$. Then
$\Bier_c(M)=\Bier_c(M^\vee)\cong\Bier(\cone(Z_3))=K_P$, $P=P_3\times I$;
\item[(9)] $M=\langle x^2y\rangle=M^\vee$. Then
$\Bier_c(M)=\Bier_c(M^\vee)=\Bier(\cone(Z_3))=K_P$, $P=P_3\times I$;
\item[(10)] $M=\langle x^2z\rangle=M^\vee$. Then
$\Bier_c(M)=\Bier_c(M^\vee)\cong\Bier(\cone(Z_3))=K_P$, $P=P_3\times I$;
\item[(11)] $M=\langle x, y\rangle$, $M^\vee=\langle yz,x^2y,xz\rangle$. Then
$\Bier_c(M)=\Bier_c(M^\vee)\cong\Bier(\ast\sqcup\ast\sqcup\varnothing\sqcup\varnothing)=K_P$, $P\in\vc^2(\Delta^3)$;
\item[(12)] $M=\langle x, z\rangle$, $M^\vee=\langle xy,yz,x^2z\rangle$. Then
$\Bier_c(M)=\Bier_c(M^\vee)\cong\Bier(\ast\sqcup\ast\sqcup\varnothing\sqcup\varnothing)=K_P$, $P\in\vc^2(\Delta^3)$;
\item[(13)] $M=\langle y, z\rangle$, $M^\vee=\langle xyz,x^2\rangle$. Then
$\Bier_c(M)=\Bier_c(M^\vee)\cong\Bier(\ast\sqcup\ast\sqcup\varnothing\sqcup\varnothing)=K_P$, $P\in\vc^2(\Delta^3)$;
\item[(14)] $M=\langle xy, z\rangle$, $M^\vee=\langle x^2,yz,xy\rangle$. Then
$\Bier_c(M)=\Bier_c(M^\vee)\cong\Bier(I\sqcup\ast\sqcup\varnothing)=K_P$, $P\in\vc^1(I^3)$;
\item[(15)] $M=\langle xz, y\rangle$, $M^\vee=\langle x^2,yz,xz\rangle$. Then
$\Bier_c(M)=\Bier_c(M^\vee)\cong\Bier(I\sqcup\ast\sqcup\varnothing)=K_P$, $P\in\vc^1(I^3)$;
\item[(16)] $M=\langle x^2,y\rangle$, $M^\vee=\langle x^2y,xz\rangle$. Then
$\Bier_c(M)=\Bier_c(M^\vee)\cong\Bier(\ast\sqcup\ast\sqcup\varnothing\sqcup\varnothing)=K_P$, $P\in\vc^2(\Delta^3)$;
\item[(17)] $M=\langle x^2,z\rangle$, $M^\vee=\langle x^2z,xy\rangle$. Then
$\Bier_c(M)=\Bier_c(M^\vee)\cong\Bier(\ast\sqcup\ast\sqcup\varnothing\sqcup\varnothing)=K_P$, $P\in\vc^2(\Delta^3)$;
\item[(18)] $M=\langle x^2y,z\rangle$, $M^\vee=\langle x^2,xy\rangle$. Then
$\Bier_c(M)=\Bier_c(M^\vee)\cong\Bier(\ast\sqcup\ast\sqcup\varnothing\sqcup\varnothing)=K_P$, $P\in\vc^2(\Delta^3)$;
\item[(19)] $M=\langle x^2z,y\rangle$, $M^\vee=\langle x^2,xz\rangle$. Then
$\Bier_c(M)=\Bier_c(M^\vee)\cong\Bier(\ast\sqcup\ast\sqcup\varnothing\sqcup\varnothing)=K_P$, $P\in\vc^2(\Delta^3)$;
\item[(20)] $M=\langle x,yz\rangle$, $M^\vee=\langle xz,xy,yz\rangle$. Then
$\Bier_c(M)=\Bier_c(M^\vee)\cong\Bier(I\sqcup\ast\sqcup\varnothing)=K_P$, $P\in\vc^1(I^3)$;
\item[(21)] $M=\langle xy,yz\rangle=M^\vee$. Then
$\Bier_c(M)=\Bier_c(M^\vee)\cong\Bier(\cone(Z_5))=K_P$, $P=P_5\times I$;
\item[(22)] $M=\langle xz,yz\rangle=M^\vee$. Then
$\Bier_c(M)=\Bier_c(M^\vee)\cong\Bier(\cone(Z_5))=K_P$, $P=P_5\times I$;
\item[(23)] $M=\langle xy, xz\rangle$, $M^\vee=\langle x^2, yz\rangle$. Then
$\Bier_c(M)=\Bier_c(M^\vee)\cong\Bier(\cone(Z_5))=K_P$, $P=P_5\times I$;
\end{itemize}
Finally, unlike what took place before, there are 3 different combinatorial types of Murai spheres corresponding to the set of combinatorial truncation polytopes $\vc^3(\Delta^3)$:
\begin{itemize}
\item[(24)] $M=\langle x, y, z\rangle$, $M^\vee=\langle x^2, xy, xz, yz\rangle$. Then
$\Bier_c(M)=\Bier_c(M^\vee)\cong\Bier(\ast\sqcup\ast\sqcup\ast\sqcup\varnothing)=K_P$, where $P\in\vc^3(\Delta^3)$. In the corresponding stacked polytope $P^*$, the three pyramids are glued along the three facets of the simplex $\Delta^3$.
\item[(25)] $M=\langle x^2, y, z\rangle$, $M^\vee=\langle x^2, xy, xz\rangle$. Then
$\Bier_c(M)=\Bier_c(M^\vee)=K_P$, where $P\in\vc^3(\Delta^3)$.
In the corresponding stacked polytope $P^*$, the three pyramids have a common edge. Therefore, $P$ is \emph{not} isomorphic to any Bier polytope.
\item[(26)] $M=\langle x^2, xy, z\rangle=M^\vee$. Then
$\Bier_c(M)=\Bier_c(M^\vee)=K_P$, where $P\in\vc^3(\Delta^3)$.
In the corresponding stacked polytope $P^*$, the three pyramids have a common vertex, two of the three pairs of these pyramids have a common edge, while the last pair has merely a common vertex. Therefore, $P$ is {\emph{not}} isomorphic to any Bier polytope.
\end{itemize}
\end{example}

Summarizing the observations of this section, we obtain the following classification result.

\begin{theorem}\label{MuraiClassificationTheorem}
Each 1-dimensional Murai sphere is isomorphic to a Bier sphere. Each 2-dimensional Murai sphere is isomorphic to a Bier sphere, see Figure~\ref{TwoDimBierFig}, or to $K_P:=\Bier_{(2,1,1)}(\langle x^2,y,z\rangle)$, or to $K_Q:=\Bier_{(2,1,1)}(\langle x^2,xy,z\rangle)$, where the 3-dimensional truncation polytopes $P$ and $Q$ are shown in Figure~\ref{fig:types}. In particular, all two- and three-dimensional simple polytopes $P$ such that $K_P$ is isomorphic to a Murai sphere have Delzant realizations.
\end{theorem}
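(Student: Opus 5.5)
The plan is an exhaustive case analysis driven by dimension, with the enumerations already carried out in the examples of this section. First, since $\Bier_c(M)$ is a $(|c|-2)$-sphere, dimension $1$ forces $|c|=3$ and dimension $2$ forces $|c|=4$. Up to permuting the variables, these give $c\in\{(3),(2,1),(1,1,1)\}$ and $c\in\{(4),(3,1),(2,2),(2,1,1),(1,1,1,1)\}$. Relabelling variables induces an isomorphism of Murai spheres, so this reduction is harmless.

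For each such $c$, I will check that the examples list every proper $c$-multicomplex up to the symmetries that preserve $c$ and up to Alexander duality. The duality is justified by $\Bier_c(M)\cong\Bier_c(M^\vee)$ from the Remark after the definition. The examples then give a complete census.

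The identification of each sphere as stated is done using Theorem~\ref{FaceIdealMuraiThm}. It gives $\min(\Bier_c(M))$ explicitly as polarizations of $\min(M)$ and $\min(M^\vee)$ (via Proposition~\ref{MinimalNonMonmomialsProp}), together with the edges $\pol(x_i^{c_i+1})$. A $1$- or $2$-sphere is determined by its minimal non-faces, so comparing these with the minimal non-faces of the claimed Bier sphere or polytope nerve settles each item.

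\begin{itemize}
\item In dimension $1$, the only invariant is the number of vertices. This number lies between $3$ and $6$, and each value is realised by $\Bier(K)$ for $K$ a set of $0$ to $3$ isolated points.
\item In dimension $2$, the case $c=(4)$ is handled by Example~\ref{MuraiWithMeq1Example}. It gives $\partial\Delta^3$ and $\partial\Delta^2\ast\partial\Delta^1=K_{P_3\times I}$, and both are Bier spheres. The case $c=(1,1,1,1)$ is exactly \cite[Theorem 2.16]{LS}.
\item The cases $(3,1)$, $(2,2)$ and $(2,1,1)$ give spheres with at most $7$ vertices, matched against the Bier list in Figure~\ref{TwoDimBierFig}.
\end{itemize}

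The main obstacle is items (25) and (26), which have $7$ vertices and are stacked, with $P\in\vc^3(\Delta^3)$. There are exactly three combinatorial types of stacked $3$-polytopes with $7$ vertices, determined by how the three added pyramids are arranged. I must show that only the first type (pyramids on three facets of a single tetrahedron) is a Bier sphere. I would do this by listing all Bier spheres with $c=(1,1,1,1)$ and $7$ vertices. These are exactly $\Bier(\ast\sqcup\ast\sqcup\ast\sqcup\varnothing)$ and $\Bier$ of its dual, so they are all isomorphic. Then I would compare vertex-degree sequences to separate the types: the common edge in (25) and the common vertex in (26) produce degree patterns different from (24). This justifies the claims that $K_P$ and $K_Q$ are genuinely new.

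Finally, for the Delzant statement, every simple polygon has a Delzant realization, and simple polytopes of polytopal Bier spheres have one by \cite{LTZ}. It therefore remains to treat $P$ and $Q$. Both are truncation polytopes of $\Delta^3$. Truncating a vertex of a Delzant polytope by the hyperplane corresponding to the sum of the three adjacent primitive normals keeps the fan regular (this is a toric blow-up at a fixed point). Starting from the standard Delzant simplex and performing three such blow-ups in the prescribed combinatorial positions gives Delzant realizations of $P$ and $Q$.
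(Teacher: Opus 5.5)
Your overall route is the paper's own: the theorem is just the summary of the census of proper $c$-multicomplexes with $|c|=3,4$ carried out in the examples. Your vertex-count argument in dimension $1$ is sound. So is the toric blow-up argument for Delzant realizations of $P$ and $Q$, which the paper leaves implicit.

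One step, however, is false as written. It is not true that every $7$-vertex Bier sphere with $m=4$ is isomorphic to $\Bier(\ast\sqcup\ast\sqcup\ast\sqcup\varnothing)$. Since $f_0(\Bier(K))=f_0(K)+f_0(K^\vee)$, a $7$-vertex Bier sphere arises, up to $K\leftrightarrow K^\vee$, exactly from $K=L\sqcup\varnothing$. Here $L$ is any complex on three real vertices other than the full triangle: three points, an edge and a point, a path with two edges, or $\partial\Delta^2$. Computing facets gives three distinct combinatorial types:
\begin{itemize}
\item the stacked sphere of item (24), with degree sequence $(6,5,5,5,3,3,3)$;
\item the nerve of $\vc^1(I^3)$, i.e.\ the octahedron with one face capped, with degree sequence $(5,5,5,4,4,4,3)$, arising from $I\sqcup\ast\sqcup\varnothing$ and from $\partial\Delta^2\sqcup\varnothing$;
\item the pentagonal bipyramid, i.e.\ the nerve of $P_5\times I$, with degree sequence $(5,5,4,4,4,4,4)$, arising from the path.
\end{itemize}
The census you rely on already contradicts your claim. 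Items (14), (15) and (20)--(23) are $7$-vertex Bier spheres that are not of type (24).

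The conclusion survives once the list is corrected. The two extra types are not chordal: one contains an induced $4$-cycle, the other its equatorial induced $5$-cycle. Hence they are not stacked. Items (25) and (26), by contrast, are stacked with degree sequences $(6,6,4,4,4,3,3)$ and $(6,5,5,4,4,3,3)$. So neither is isomorphic to any Bier sphere, and your degree-sequence comparison does separate all three stacked types.

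Note also that this non-isomorphism is not needed for the theorem as stated. The theorem only asserts that every $2$-dimensional Murai sphere is a Bier sphere, $K_P$, or $K_Q$. The substantive work is the completeness of the census and the identification of each item via Theorem~\ref{FaceIdealMuraiThm}. Calling items (25)--(26) \emph{the main obstacle} therefore misplaces the weight of the argument.
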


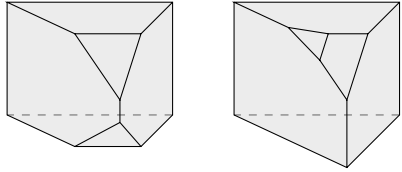
\begin{figure}[ht]
\begin{tikzpicture}
\pgfmathsetmacro\r{0.4}
\path 
    coordinate(A) at (0,0)
    coordinate(B) at (0.7,0.7)
    coordinate(C) at (-1.5,0.7)
    coordinate(A1) at ($(A)+(0,1.5)$)
    coordinate(B1) at ($(B)+(0,1.5)$)
    coordinate(C1) at ($(C)+(0,1.5)$)
    coordinate(AA) at ($(A1)!\r!(A)$)
    coordinate(AB) at ($(A1)!\r!(B1)$)
    coordinate(AC) at ($(A1)!\r!(C1)$)
    coordinate(CA) at ($(C1)!\r!(A1)$)
    coordinate(CB) at ($(C1)!\r!(B1)$)
    coordinate(CC) at ($(C1)!\r!(C)$)
    coordinate(XA) at ($(A)!\r!(A1)$)
    coordinate(XB) at ($(A)!\r!(B)$)
    coordinate(XC) at ($(A)!\r!(C)$)
    coordinate(YA) at ($(XA)!\r!(A)$)
    coordinate(YB) at ($(AB)!\r!(XA)$)
    coordinate(YC) at ($(AC)!\r!(XA)$)
    coordinate(ZA) at ($(C)!\r!(A)$)
    coordinate(ZB) at ($(C)!\r!(B)$)
    coordinate(ZC) at ($(C)!\r!(C1)$)
    coordinate(WA) at ($(AC)!\r!(AA)$)
    coordinate(WB) at ($(AC)!\r!(AB)$)
    coordinate(WC) at ($(AC)!0.2!(C1)$);
\clip (-1.6,-0.1) rectangle (0.8,2.3);    
\draw[dashed] (B)--(C);
\fill[gray!30,opacity=0.5] (XC)--(XB)--(B)--(B1)--(C1)--(C)--cycle;
\draw (C1)--(C)--(XC)--(XB)--(B)--(B1)--(C1)--(AC)--(AB)--(AA)--(AC) (AB)--(B1) (AA)--(XA)--(XB) (XC)--(XA);
\end{tikzpicture}
\hskip5mm
\begin{tikzpicture}
\clip (-1.6,-0.1) rectangle (0.8,2.3);    
\draw[dashed] (B)--(C);
\fill[gray!30,opacity=0.5] (A)--(B)--(B1)--(C1)--(C)--cycle;
\draw (C1)--(C)--(A)--(B)--(B1)--(C1)--(WC)--(WA)--(WB)--(AB)--(AA)--(WA) (AB)--(B1) (A)--(AA) (WC)--(WB);
\end{tikzpicture}
\caption{Truncation polytopes $P$ and $Q$ corresponding to the two exceptional Murai spheres in dimension two.}\label{fig:types}
\end{figure}

\section{Chordality}

In this section, we classify all chordal Murai spheres $\Bier_c(M)$ with $c\in\N^m$ and $m\leq 2$. Due to~\cite[Theorem 8.5]{Kalai}, stackedness implies chordality, see also~\cite[Proposition 4.5]{LV}. Finally, we shall find all possible pairs $(k,n)$ of positive integers such that a Murai sphere is a nerve complex of a truncation polytope from the set $\vc^k(\Delta^n)$. We start with the definition of chordality.

\begin{definition}
A simple graph $\Gamma=(V,E)$ is called \emph{chordal} if it has no induced subgraphs isomorphic to cycles of length greater than three. A simplicial complex is \emph{chordal} if its 1-skeleton is a chordal graph.
\end{definition}

\begin{example}
Let $m=1$ and $c\in\N$. By \Cref{MuraiWithMeq1Example}, every Murai sphere $\Bier_c(M)$ is a boundary of a simplex or a join of two boundaries of simplices. The boundary of every simplex is obviously chordal. A join of boundaries of simplices is chordal if and only if at least one of the simplices is of dimension at least 2.
Hence $\Bier_c(M)$ is not chordal if and only if $c=3$ and $M=\langle x\rangle$.
\end{example}

\begin{example}
Let $c=(c_1,\ldots,c_m)$, $c_i\ge 2$ and $i\ne j$. For $M=\langle x_ix_j\rangle$ the Murai sphere $\Bier_c(M)$ is not chordal. Indeed, since $M=\{x_i^0x_j^0,x_i^0x_j^1,x_i^1x_j^0,x_i^1x_j^1\}$, we have that 1-simplex $\{\gen{x}{k}{0},\gen{x}{k}{1}\}$, $k=i,j$, is not a subset of any maximal simplex $G(x_i^ax_j^b;x_k^d)$ of $\Bier_c(M)$. 
As $\{\gen{x}{i}{0},\gen{x}{j}{0}\}\subset G(x_i^1x_j^1;x_i^2)$, $\{\gen{x}{i}{0},\gen{x}{j}{1}\}\subset G(x_i^1x_j^0;x_i^2)$, $\{\gen{x}{i}{1},\gen{x}{j}{0}\}\subset G(x_i^0x_j^1;x_i^2)$, and $\{\gen{x}{i}{1},\gen{x}{j}{1}\}\subset G(x_i^0x_j^0;x_i^2)$, the vertices $\gen{x}{i}{0},\gen{x}{j}{0}, \gen{x}{i}{1},\gen{x}{j}{1}$ form a 4-cycle in $\Bier_c(M)$ without a chord, hence $\Bier_c(M)$ is not chordal.
\end{example}

\begin{proposition}\label{ChordalSpheresWithSecondCorrdinateOneProp}
Suppose $c=(c_1,c_2)\in\N^2$, where $c_1\geq 2$ and $c_2=1$.
Then a Murai sphere $\Bier_c(M)$ is not chordal if and only if one of the following conditions holds:
\begin{itemize}
\item $c=(2,1)$ and either $M=\langle x_1\rangle$, or $M^\vee=\langle x_1\rangle$;
\item $c=(2,1)$ and either $M=\langle x_1,x_2\rangle$, or $M^\vee=\langle x_1,x_2\rangle$;
\item $c_1\geq 2$ is arbitrary and either $M=\langle x_1x_2\rangle$, or $M^\vee=\langle x_1x_2\rangle$.
\end{itemize}
\end{proposition}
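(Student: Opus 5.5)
The plan is to make the $1$-skeleton of $\Bier_c(M)$ completely explicit using Theorem~\ref{FaceIdealMuraiThm}, and then run a case analysis.

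\textbf{Parametrize $M$.} Since $c_2=1$, every $c$-multicomplex is $M=\langle x_1^a,\,x_1^b x_2\rangle$ for integers $-1\le b\le a\le c_1$. Here $b=-1$ means $x_2\notin M$, and $M$ is proper iff $(a,b)\neq(c_1,c_1)$. The non-elements of $M$ are $x_1^i$ with $i>a$ and $x_1^ix_2$ with $i>b$. Dualizing, $M^\vee$ has parameters
$$(a',b')=(c_1-b-1,\;c_1-a-1).$$
Since $\Bier_c(M)\cong\Bier_c(M^\vee)$, this involution lets me treat only one member of each dual pair. It also explains why every item of the statement is phrased as ``$M=\dots$ or $M^\vee=\dots$''.

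\textbf{Read off the non-edges.} By Theorem~\ref{FaceIdealMuraiThm}, the minimal non-faces are:
\begin{itemize}
\item $\{\gen x10,\dots,\gen x1a\}$ and $\{\gen x10,\dots,\gen x1b,\gen x20\}$, from $\pol(I_c(M))$;
\item $\{\gen x1{b+1},\dots,\gen x1{c_1}\}$ and $\{\gen x1{a+1},\dots,\gen x1{c_1},\gen x21\}$, from $\pol^\ast(I_c(M^\vee))$;
\item $\{\gen x20,\gen x21\}$ and the full set $\tilde X_1$, from the third summand.
\end{itemize}
Some of these are only non-minimal non-faces when $a=c_1$ or $b=-1$, and this must be tracked. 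Singletons give ghost vertices, which occur for $a=0$, $b=-1$, $b=c_1-1$, $a=c_1$. The only non-edges among actual vertices are then:
\begin{itemize}
\item $\{\gen x20,\gen x21\}$, always;
\item $\{\gen x10,\gen x11\}$ iff $a=1$;
\item $\{\gen x10,\gen x20\}$ iff $b=0$;
\item $\{\gen x1{c_1-1},\gen x1{c_1}\}$ iff $b=c_1-2$;
\item $\{\gen x1{c_1},\gen x21\}$ iff $a=c_1-1$.
\end{itemize}
So the $1$-skeleton is a complete graph minus at most five edges $E'$.

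\textbf{Translate chordality.} A complete graph minus $E'$ contains an induced $C_k$ only if the complement of $C_k$ lies in $E'$ on those $k$ vertices. The complement of $C_k$ has $k(k-3)/2$ edges, so $k\ge 6$ is impossible. An induced $C_5$ requires $E'$ to contain a $5$-cycle. An induced $C_4$ requires two disjoint non-edges $\{u,v\},\{w,z\}$ with all four cross pairs being edges. The listed non-edges live on the six vertices $\gen x10,\gen x11,\gen x1{c_1-1},\gen x1{c_1},\gen x20,\gen x21$; these coincide partly when $c_1=2$.

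\textbf{Case analysis.}
\begin{itemize}
\item $c_1\ge3$. The non-edges form a path
$$\gen x11-\gen x10-\gen x20-\gen x21-\gen x1{c_1}-\gen x1{c_1-1},$$
with each edge present only under its condition. An induced $C_4$ needs two present edges of this path that are not adjacent in it and have no present path edge between them. Enumerating the compatible conditions on $(a,b)$, the only surviving configuration should be: $\{\gen x20,\gen x21\}$ together with one of the outer pairs, with the connecting edge absent. After the duality reduction this forces $(a,b)=(1,1)$, i.e.\ $M=\langle x_1x_2\rangle$, or its dual. The earlier example with $M=\langle x_ix_j\rangle$ already confirms non-chordality there. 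I must also check that combinations like $a=1,b=0$ create the adjacent non-edge $\gen x10\gen x20$ and thus kill the $4$-cycle.
\item $c_1=2$. Here $\gen x1{c_1-1}=\gen x11$, the sphere is $1$-dimensional, and the earlier list for $c=(2,1)$ applies directly. It shows $Z_4$ for $\langle x_1\rangle$, $\langle x_2\rangle$ and their duals, and $Z_5$ for $\langle x_1,x_2\rangle$. Note that $\langle x_2\rangle^\vee=\langle x_1x_2\rangle$, which matches the third item.
\end{itemize}

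\textbf{Main obstacle.} The hardest part is the bookkeeping of degenerate parameter values, where a listed non-face is not minimal or a vertex is a ghost. The sets $\{a=1\}$, $\{b=0\}$, $\{b=c_1-2\}$, $\{a=c_1-1\}$ can overlap for small $c_1$, and one must ensure no spurious induced cycle is claimed. I will handle this by organizing cases by which of the four conditions hold, using the duality $(a,b)\leftrightarrow(c_1-b-1,c_1-a-1)$. That duality swaps the condition pairs $a=1\leftrightarrow b=c_1-2$ and $b=0\leftrightarrow a=c_1-1$.
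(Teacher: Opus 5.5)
Your proposal is correct and is essentially the paper's argument: read the non-edges off the Stanley--Reisner ideal of Theorem~\ref{FaceIdealMuraiThm} and look for chordless cycles. Your uniform parametrization $M=\langle x_1^a,x_1^bx_2\rangle$ with the duality $(a,b)\mapsto(c_1-b-1,c_1-a-1)$, and your observation that the non-edges form a subgraph of a fixed path (a $5$-cycle when $c_1=2$), simply organize the paper's case split more cleanly.

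When you carry out the $c_1\ge 3$ enumeration, two details need fixing. First, $\{\gen{x}{2}{0},\gen{x}{2}{1}\}$ is a non-edge between real vertices only when $b\ge 0$ and $a\le c_1-1$; this is exactly what excludes $M=\langle x_1\rangle$ and its dual. Second, for $c_1=3$ and $a=b=1$ the two outer pairs also give an induced $4$-cycle, contrary to your claim that only configurations involving $\{\gen{x}{2}{0},\gen{x}{2}{1}\}$ survive; this changes nothing, since it is again $M=\langle x_1x_2\rangle$.
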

\begin{proof}
In what follows an empty product is set to be equal to $1$. We consider all possible cases below.

If $M=\langle x_1^k\rangle$, where $0\leq k\leq c_1$, then $M^\vee=\langle (x_1^{k+1})^c,(x_2)^c\rangle=\langle x_1^{c_1-k-1}x_2,x_1^{c_1}\rangle$ and we obtain $\pol(I_c(M))=(x_{1,0}\ldots x_{1,k},x_{2,0})$ and $\pol^*(I_c(M^\vee))=(x_{1,c_1}\ldots x_{1,k+1}x_{2,1})$. Therefore, $\Bier_c(M)=\partial\Delta^{k}\ast\partial\Delta^{c_1-k}$. It is chordal, unless $k=1=c_1-k$; that is, $c=(2,1), M=\langle x_1\rangle$ and $\Bier_c(M)=Z_4$.

If $M=\langle x_1^{a}, x_1^{b}x_{2}\rangle$ and $c_1\geq a>b\geq 0$, then $M^\vee=\langle (x_1^{a+1})^c,(x_1^{b+1}x_2)^c\rangle=\langle x_1^{c_1-a-1}x_2, x_1^{c_1-b-1}\rangle$ and one gets $\pol(I_c(M))=(x_{1,0}\ldots x_{1,a},x_{1,0}\ldots x_{1,b}x_{2,0})$ and $\pol^*(I_c(M^\vee))=(x_{1,c_1}\ldots x_{1,a+1}x_{2,1},x_{1,c_1}\ldots x_{1,b+1})$. Therefore, the Stanley--Reisner ideal of $\Bier_c(M)$ equals:
$$
I=(x_{1,0}\ldots x_{1,a},x_{1,0}\ldots x_{1,b}x_{2,0},x_{1,c_1}\ldots x_{1,a+1}x_{2,1},x_{1,c_1}\ldots x_{1,b+1},x_{2,0}x_{2,1}).
$$

If $b\geq 1$ and $c_1\geq b+3$, then there is only one non-edge in the Murai sphere, hence the sphere is chordal.

If $b=0$, then $\{\gen{x}{1}{0},\gen{x}{2}{0}\}$ and $\{\gen{x}{2}{0},\gen{x}{2}{1}\}$ are non-edges and $\{\gen{x}{1}{0},\gen{x}{2}{1}\}$ is an edge. To have a chordless cycle we need at least two more vertices, $\gen{x}{1}{i}$ and $\gen{x}{1}{j}$, not connected to each other. This implies $c_1=2$, and hence $i=1$ and $j=2$. If $a=2$, then $\Bier_c(M)$ would equal the 4-cycle with one chord, and if $a=1$ we obtain a chordless 5-cycle as 
$c=(2,1), M=\langle x_1, x_2\rangle$ and $\Bier_c(M)=Z_5$ on the vertex set $\{\gen{x}{1}{0},\gen{x}{1}{2},\gen{x}{2}{0},\gen{x}{1}{1},\gen{x}{2}{1}\}$.

If $c_1=b+1$, then $a=c_1$ and we get 
$I=(x_{1,0}\ldots x_{1,b}x_{2,0},x_{2,1},x_{1,b+1})$. Therefore, $\Bier_c(M)=\partial\Delta^{b+1}$, is chordal.

If $c_1=b+2$ and $a=c_1$, then $I=(x_{1,0}\ldots x_{1,b}x_{2,0},x_{2,1},x_{1,b+2}x_{1,b+1})$. We have at least two non-edges if and only if $b=0$; the non-edges are $\{x_{1}^{0},x_{2}^{0}\}$ and $\{x_{1}^{1},x_{1}^{2}\}$. We obtain a chordless 4-cycle as 
$c=(2,1), M=\langle x_1^2, x_2\rangle$ and $\Bier_c(M)=Z_4$ on the vertex set 
$\{\gen{x}{1}{0},\gen{x}{2}{0},\gen{x}{1}{1},\gen{x}{1}{2}\}$,
which has already been considered above, since $M^\vee=\langle x_1\rangle$.

If $c_1=b+2$ and $a=c_1-1=b+1$, then the Stanley--Reisner ideal of $\Bier_c(M)$ has the form:
$I=(x_{1,0}\ldots x_{1,b+1},x_{1,0}\ldots x_{1,b}x_{2,0},x_{1,b+2}x_{2,1},x_{1,b+2}x_{1,b+1},x_{2,0}x_{2,1})$.

In case $b\geq 1$, we have 3 non-edges: $\{\gen{x}{1}{b+2},\gen{x}{2}{1}\}, \{\gen{x}{1}{b+2},\gen{x}{1}{b+1}\}$, and $\{\gen{x}{2}{0},\gen{x}{2}{1}\}$. No chordless 4-cycle corresponds to any pair of those.

In case $b=0$, we have 
$I=(x_{1,0}x_{1,1},x_{1,0}x_{2,0},x_{1,2}x_{2,1},x_{1,2}x_{1,1},x_{2,0}x_{2,1})$, which yields a chordless 5-cycle as 
$c=(2,1),M=\langle x_1, x_2\rangle$ and $\Bier_c(M)=Z_5$ on the vertex set 
$\{\gen{x}{1}{0},\gen{x}{1}{2},\gen{x}{2}{0},\gen{x}{1}{1},\gen{x}{2}{1}\}$. This has already been considered above. 

If $M=\langle x_1^{k}x_2\rangle$ and $0\leq k\leq c_1-1$, then $M^\vee=\langle (x_1^{k+1})^c\rangle=\langle x_1^{c_1-k-1}x_2\rangle$ and we obtain 
$\pol(I_c(M))=(x_{1,0}\ldots x_{1,k})$ and $\pol^*(I_c(M^\vee))=(x_{1,c_1}\ldots x_{1,k+1})$. Therefore, the Stanley--Reisner ideal of $\Bier_c(M)$ equals $I=(x_{1,0}\ldots x_{1,k},x_{1,c_1}\ldots x_{1,k+1},x_{2,0}x_{2,1})$. Thus, $\Bier_c(M)=\partial\Delta^{k}\ast\partial\Delta^{c_1-k-1}\ast\partial\Delta^1$.
The latter sphere is non-chordal if and only if $k=1$, or $c_1-k-1=1$.
This finishes the proof.
\end{proof}

\begin{lemma}\label{BoundsChordlessCycleLemma}
If $Z$ is a chordless cycle in $\Bier_c(M)$ with $c\in\N^m$, then the length of $Z$ is at most $m+3$.   
\end{lemma}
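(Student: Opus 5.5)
The plan is a local degree count in the complement graph. In a chordless cycle $Z$ of length $\ell\geq 4$, every vertex $v$ of $Z$ is adjacent, within $Z$, only to its two cycle neighbours. Hence $v$ is non-adjacent in $\Bier_c(M)$ to exactly $\ell-3$ other vertices of $Z$. So it suffices to show that every (non-ghost) vertex of $\Bier_c(M)$ has at most $m$ non-neighbours; this gives $\ell-3\leq m$. Cycles of length $3$ satisfy the bound trivially, since $m\geq 1$.

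To bound non-neighbours I will use Theorem~\ref{FaceIdealMuraiThm}. Two non-ghost vertices $u,w$ form a non-edge exactly when $uw$ is a degree-two generator of $I_{SR}(\Bier_c(M))$. I will list these generators.

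\emph{Generators $\pol(x^a)$ with $x^a\in\min(M)$ of degree two.} The polarization $\pol(x^a)$ is quadratic only if either $x^a=x_i^2$, giving $x_{i,0}x_{i,1}$, or $x^a=x_ix_k$ with $i\neq k$, giving $x_{i,0}x_{k,0}$.

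\emph{Generators $\pol^\ast$ from $\min(M^\vee)$.} Symmetrically, these give only $x_{i,c_i}x_{i,c_i-1}$ and $x_{i,c_i}x_{k,c_k}$.

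\emph{The term $\pol(x_i^{c_i+1})$.} It has degree $c_i+1$, so it is quadratic only when $c_i=1$, giving $x_{i,0}x_{i,1}$.

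From this list, the possible non-neighbours are as follows.
\begin{itemize}
\item $\gen{x}{i}{0}$: at most the $m-1$ vertices $\gen{x}{k}{0}$ with $k\neq i$, together with $\gen{x}{i}{1}$. That is at most $m$.
\item $\gen{x}{i}{c_i}$: symmetrically, at most $m$.
\item $\gen{x}{i}{j}$ with $0<j<c_i$: it occurs only in the generators $x_{i,0}x_{i,1}$ (when $j=1$) and $x_{i,c_i}x_{i,c_i-1}$ (when $j=c_i-1$). So it has at most the two non-neighbours $\gen{x}{i}{0}$ and $\gen{x}{i}{c_i}$.
\end{itemize}
When $c_i=1$, the vertices $\gen x i 0,\gen x i 1$ fall under the first two cases and the count is unchanged.

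For $m\geq 2$ the bound $2\leq m$ covers the interior vertices, so every vertex has at most $m$ non-neighbours and $\ell\leq m+3$.

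For $m=1$ I will invoke the $m=1$ chordality example directly. There, by Example~\ref{MuraiWithMeq1Example}, $\Bier_c(M)$ is a boundary of a simplex or a join of two simplex boundaries. Its only chordless cycle of length greater than three is the $4$-cycle arising for $c=3$, $M=\langle x\rangle$, so $\ell\leq 4=m+3$. This separate treatment is needed because for $c=2$ the middle vertex $\gen x{}1$ would have two non-neighbours.

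The main step needing care is ensuring the list of quadratic generators is complete. In particular, $\pol$ of a minimal non-element involving two distinct variables has degree two only for $x_ix_k$ with exponents one. Likewise, no mixed generator can involve an interior vertex $\gen{x}{i}{j}$ with $1<j<c_i-1$, or with $j=1$ but $k\neq i$. This follows from the explicit shape of $\pol$ and $\pol^\ast$, since $\pol$ uses the indices $0,\ldots,a_i-1$ and $\pol^\ast$ uses the indices $c_i,\ldots,c_i-a_i+1$.
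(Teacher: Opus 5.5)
Your proof is correct, but it takes a genuinely different route from the paper's.

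The paper works with one edge $\{u,w\}$ of $Z$ and a facet $G(x^\beta;x_j^\gamma)$ containing it. Every vertex of that facet is adjacent to both $u$ and $w$. In a chordless cycle of length at least $4$, no third vertex is adjacent to both ends of an edge. Hence $Z\setminus\{u,w\}$ lies in the complement $\{\gen{x}{1}{\beta_1},\ldots,\gen{x}{m}{\beta_m},\gen{x}{j}{\gamma}\}$ of the facet, which has $m+1$ elements. That argument uses only the shape of the facets, needs no Stanley--Reisner description, and is uniform in $m\geq 1$.

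You instead use Theorem~\ref{FaceIdealMuraiThm} to list all quadratic minimal non-faces. From this you prove the stronger per-vertex statement that every vertex has at most $m$ non-neighbours. The facet-complement observation alone gives only $m+1$ non-neighbours per vertex, hence only $\ell\leq m+4$ under your counting. So your sharper bound genuinely needs the ideal-theoretic input, whereas the paper reaches $m+3$ by using two adjacent vertices at once. Your route costs a case analysis and a separate look at $m=1$. In return it gives an explicit description of all non-edges, which is the kind of information the paper later extracts case by case in Propositions~\ref{ChordalSpheresWithSecondCorrdinateOneProp} and~\ref{ChordalSpheresWithCoordinatesGreaterThanTwoProp}.

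One small inaccuracy: for $m=1$, $c=2$ the middle vertex never actually has two non-neighbours. The generators $x_{1,0}x_{1,1}$ and $x_{1,2}x_{1,1}$ would together require $x_1^2\in\min(M)\cap\min(M^\vee)$. But $x_1^2\in\min(M)$ forces $M=\langle x_1\rangle$, so $M^\vee=\langle 1\rangle$ and $\min(M^\vee)=\{x_1\}$. Your count therefore already gives at most $m$ non-neighbours when $m=1$, and the appeal to the $m=1$ example, while valid, is not needed.
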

\begin{proof}
Let $\{\gen{x}{i_1}{\alpha_1},\gen{x}{i_2}{\alpha_2}\}$ be an edge in $Z$. Then there exists a facet $G(x_1^{\beta_1}\cdots x_m^{\beta_m};x_j^\gamma)$ in the Murai sphere $\Bier_c(M)$ containing this edge. Therefore, both vertices $\gen{x}{i_1}{\alpha_1}$ and $\gen{x}{i_2}{\alpha_2}$ are connected with all vertices of the Murai sphere except maybe $\gen{x}{1}{\beta_1}, \cdots, \gen{x}{m}{\beta_m},\gen{x}{j}{\gamma}$. Since $Z$ is chordless, we have that the set of vertices of $Z$ is a subset of $\{\gen{x}{i_1}{\alpha_1},\gen{x}{i_2}{\alpha_2},\gen{x}{1}{\beta_1}, \cdots, \gen{x}{m}{\beta_m},\gen{x}{j}{\gamma}\}$. This finishes the proof.
\end{proof}

As was shown in the proof of Proposition~\ref{ChordalSpheresWithSecondCorrdinateOneProp}, the Murai sphere $\Bier_{(2,1)}(\langle x_1,x_2\rangle)$ has an induced chordless 5-cycle. The next result asserts that this can not be the case when $c_2\geq 2$.

\begin{lemma}\label{Chordless4CycleLemma}
Let $c=(c_1,c_2)\in\N^2$ and $c_1,c_2\ge 2$. If $Z$ is a chordless cycle in a Murai sphere $\Bier_c(M)$, then the length of $Z$ is equal to 4.    
\end{lemma}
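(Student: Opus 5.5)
The plan is to combine Lemma~\ref{BoundsChordlessCycleLemma} with an explicit description of the non-edges of $\Bier_c(M)$. For $m=2$ that lemma bounds the length of a chordless cycle $Z$ by $m+3=5$. Chordless cycles have length at least $4$ by definition, so it suffices to rule out an induced $5$-cycle. The key observation is this: if five vertices span an induced $5$-cycle in the $1$-skeleton, then on those five vertices the \emph{non-edge graph} (the complement) is again a $5$-cycle, since the complement of $Z_5$ is $Z_5$. So I will show that the graph of non-edges of $\Bier_c(M)$ contains no $5$-cycle at all.

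First I describe all non-edges using Theorem~\ref{FaceIdealMuraiThm}. A pair of real vertices is a non-edge exactly when it is a minimal non-face of cardinality $2$. These come from the squarefree quadratic generators of the three summands of $I_{SR}(\Bier_c(M))$.
\begin{itemize}
\item A generator of $I_c(M)$ of total degree $2$ is one of $x_1^2$, $x_2^2$, $x_1x_2$. Its polarization is, respectively, $x_{1,0}x_{1,1}$, $x_{2,0}x_{2,1}$, or $x_{1,0}x_{2,0}$.
\item The $\ast$-polarizations of degree-$2$ generators of $I_c(M^\vee)$ give $x_{1,c_1}x_{1,c_1-1}$, $x_{2,c_2}x_{2,c_2-1}$, or $x_{1,c_1}x_{2,c_2}$.
\item The third summand contributes $\pol(x_i^{c_i+1})=x_{i,0}\cdots x_{i,c_i}$. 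Its degree is $c_i+1\ge 3$ because $c_1,c_2\ge 2$, so it contributes no non-edges. This is exactly where the hypothesis $c_2\ge2$ enters, in contrast with the $Z_5$ in $\Bier_{(2,1)}(\langle x_1,x_2\rangle)$.
\end{itemize}
Generators of higher degree give minimal non-faces of size at least $3$. Degree-$1$ generators only produce ghost vertices, which cannot lie on $Z$.

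Hence the non-edge graph is a subgraph of the graph $N$ on the vertex set $\tilde X$ whose edges are
$$
\{\gen x10,\gen x11\},\ \{\gen x10,\gen x20\},\ \{\gen x20,\gen x21\},\ \{\gen x1{c_1},\gen x1{c_1-1}\},\ \{\gen x1{c_1},\gen x2{c_2}\},\ \{\gen x2{c_2},\gen x2{c_2-1}\}.
$$
I then inspect $N$ directly. If $c_1,c_2\ge 3$, the vertices $\gen x11,\gen x1{c_1-1}$ are distinct, as are $\gen x21,\gen x2{c_2-1}$. Then $N$ is a disjoint union of two paths with four vertices each, plus isolated vertices. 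If, say, $c_1=2$, the two paths are glued at $\gen x11$. If $c_1=c_2=2$, $N$ becomes the single $6$-cycle
$$
\gen x10,\ \gen x11,\ \gen x12,\ \gen x22,\ \gen x21,\ \gen x20 .
$$
In every case $N$ has at most six edges and is a union of paths or a $6$-cycle, so it contains no $5$-cycle: a proper subgraph of a $6$-cycle is a forest. Therefore no induced $5$-cycle exists, and every chordless cycle has length exactly $4$.

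The main obstacle is the first step: justifying carefully that every non-edge arises from a squarefree \emph{quadratic} generator of the three polarized ideals. One must check that polarization of a minimal generator stays a minimal generator of the sum, or at least that any $2$-element minimal non-face must equal one of the listed quadratic monomials. This should follow because each listed monomial is squarefree of degree $2$, and any monomial generator of degree at least $3$ cannot divide a degree-$2$ squarefree monomial. The case analysis on $c_i=2$ versus $c_i\ge3$ is then routine.
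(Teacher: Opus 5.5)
Your proof is correct, but it follows a different route from the paper's.

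\textbf{The paper's route.} The paper works only with the facet description. It assumes an induced $5$-cycle $Z$ with at least three vertices in $\tilde X_1$. It then picks $\beta$ with $\gen{x}{2}{\beta}\notin Z$, which is possible because $c_2+1\ge 3$. Splitting into the cases $x_1^{c_1}x_2^{\beta}\in M$, $x_2^{\beta}\notin M$, or neither, it exhibits one of the facets $G(x_1^{c_1}x_2^\beta;x_2^{c_2})$, $G(x_1^0x_2^0;x_2^\beta)$ or $G(x_1^{\alpha}x_2^\beta;x_1^{\alpha+1})$. Each omits exactly three vertices, one of which is $\gen{x}{2}{\beta}$. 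Such a facet therefore contains three vertices of $Z$, i.e.\ a triangle, which a chordless $5$-cycle cannot have.

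\textbf{Your route.} You instead use Theorem~\ref{FaceIdealMuraiThm} to confine every non-edge to a single graph $N$ that does not depend on $M$, and then use that $Z_5$ is self-complementary. Your justification that a non-edge between real vertices must equal a quadratic generator of one of the three summands is sound. No minimality in the sum is needed, only that the union of the three generating sets generates $I_{SR}$. Your case analysis of $N$ (two disjoint paths, a single glued path, or a $6$-cycle) is also right.

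\textbf{What each approach buys.}
\begin{itemize}
\item Your route shows exactly where $c_2\ge 2$ enters. For $c=(2,1)$ the third summand adds $x_{2,0}x_{2,1}$, which closes $N$ into precisely the $Z_5$ of $\Bier_{(2,1)}(\langle x_1,x_2\rangle)$.
\item It makes Lemma~\ref{BoundsChordlessCycleLemma} unnecessary. Every vertex of an induced $\ell$-cycle has $\ell-3$ non-neighbors on it, while $N$ has maximum degree $2$, so $\ell\le 5$.
\item It is the natural setting for the $4$-cycle analysis in Proposition~\ref{ChordalSpheresWithCoordinatesGreaterThanTwoProp}: the two diagonals of an induced $4$-cycle must be two disjoint edges of $N$.
\item The paper's route, in exchange, needs nothing beyond the definition of $\Facet_c(M)$. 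It does not depend on Murai's description of the Stanley--Reisner ideal, and it matches the style of the proof of Lemma~\ref{BoundsChordlessCycleLemma}.
\end{itemize}
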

\begin{proof}
By \Cref{BoundsChordlessCycleLemma}, the length of a chordless cycle in $\Bier_c(M)$ is at most 5. Let $Z$ be an induced chordless cycle of length 5 in the Murai sphere. Without loss of generality, we may assume that at least three vertices of $Z$ are of the form $\gen{x}{1}{*}$. Since $c_2\ge 2$, there exists $\beta\le c_2$ such that $\gen{x}{2}{\beta}$ is not a vertex of $Z$. 

If $x_1^{c_1} x_2^\beta\in M$, then $\beta<c_2$ as $M$ is a proper $c$-multicomplex, and hence, by definition of a Murai sphere, $G(x_1^{c_1}x_2^\beta;x_2^{c_2})$ is a facet of $\Bier_c(M)$. It contains all vertices of the Murai sphere except $\gen{x}{1}{c_1}$, $\gen{x}{2}{\beta}$, and $\gen{x}{2}{c_2}$.
Therefore, it contains at least three vertices of $Z$, so $Z$ is not chordless.

If $x_1^{0} x_2^\beta\not\in M$, then $G(x_1^{0}x_2^0;x_2^{\beta})$ is a facet of $\Bier_c(M)$. It contains at least three vertices of $Z$, hence the cycle $Z$ is not chordless.

It follows that there exists $\alpha<c_1$ such that $x_1^\alpha x_2^\beta\in M$, but $x_1^{\alpha+1} x_2^\beta\not\in M$. Then $G(x_1^{\alpha}x_2^\beta;x_1^{\alpha+1})$ is a facet containing at least three vertices of $Z$, hence $Z$ is not chordless, a contradiction. Thus, we have shown that there is no chordless cycle of length 5 in the Murai sphere $\Bier_c(M)$.
\end{proof}

\begin{proposition}\label{ChordalSpheresWithCoordinatesGreaterThanTwoProp}
Let $c=(c_1,c_2)\in\N^2$ and $c_1,c_2\ge 2$. Then $\Bier_c(M)$ is not chordal if and only if one of the following conditions takes place:
\begin{itemize}
\item either $M=\langle x_1 x_2\rangle$, or $M^\vee=\langle x_1x_2\rangle$;
\item $c_1=2$ and either $M=\langle x_1 x_2^{c_2-1}\rangle$, or $M^\vee=\langle x_1 x_2^{c_2-1}\rangle$;
\item $c_1=3$ and $M=\langle x_1 x_2^{c_2}\rangle=M^\vee$.
\end{itemize}
\end{proposition}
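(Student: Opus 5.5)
By Lemma~\ref{Chordless4CycleLemma}, $\Bier_c(M)$ is non-chordal if and only if its $1$-skeleton contains an induced $4$-cycle. An induced $4$-cycle on vertices $u,w,v,z$ (in cyclic order) is exactly a pair of \emph{disjoint} non-edges $\{u,v\}$, $\{w,z\}$ such that all four cross pairs $\{u,w\},\{u,z\},\{v,w\},\{v,z\}$ are edges. So the plan is to list all possible non-edges using Theorem~\ref{FaceIdealMuraiThm}, and then decide which pairs of them give an induced $4$-cycle.

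The non-edges are the degree-two generators of $I_{SR}(\Bier_c(M))$. Since $c_1,c_2\ge2$, the third summand $\pol(x_1^{c_1+1},x_2^{c_2+1})$ has no quadratic generators. By Proposition~\ref{MinimalNonMonmomialsProp}, $G(I_c(M))=\min(M)$ and $G(I_c(M^\vee))=\min(M^\vee)=\{(x^a)^c\mid x^a\in\max(M)\}$. Hence the only possible non-edges are:
\begin{itemize}
\item $A_i=\{\gen{x}{i}{0},\gen{x}{i}{1}\}$ if $x_i^2\in\min(M)$;
\item $A_{12}=\{\gen{x}{1}{0},\gen{x}{2}{0}\}$ if $x_1x_2\in\min(M)$;
\item $B_i=\{\gen{x}{i}{c_i},\gen{x}{i}{c_i-1}\}$ if $x_i^2\in\min(M^\vee)$, that is, $x_i^{c_i-2}x_{3-i}^{c_{3-i}}\in\max(M)$;
\item $B_{12}=\{\gen{x}{1}{c_1},\gen{x}{2}{c_2}\}$ if $x_1x_2\in\min(M^\vee)$, that is, $x_1^{c_1-1}x_2^{c_2-1}\in\max(M)$.
\end{itemize}
A linear minimal non-element of $M$ or $M^\vee$ only turns a vertex into a ghost vertex and creates no non-edge between real vertices. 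I would first note that a ghost vertex cannot lie on a cycle, so these can be discarded. Then the cross-pair condition says precisely that no cross pair is itself one of the listed non-edges.

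Next I would run through the unordered pairs of disjoint non-edges. $A_1$ and $A_2$ are always disjoint, and the cross pair $\{\gen x10,\gen x20\}$ must then be an edge. This forces $x_1x_2\in M$ while $x_1^2,x_2^2\notin M$, so $M=\langle x_1x_2\rangle$. Dually, $B_1$ and $B_2$ give $M^\vee=\langle x_1x_2\rangle$. $A_{12}$ and $B_{12}$ are disjoint; their cross pairs are $\{\gen x10,\gen x2{c_2}\}$ and $\{\gen x1{c_1},\gen x20\}$, which are never in the list, so the question is only whether both non-edges occur at once. $x_1x_2\in\min(M)$ forces $x_1,x_2$ to be the only non-constant elements of degree $\le 1$ and degree $2$ to be absent, while $x_1^{c_1-1}x_2^{c_2-1}\in\max(M)$ forces $M$ to contain a monomial of degree $c_1+c_2-2\ge2$. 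This is a contradiction unless the degrees collapse, so I expect this pair to give nothing new.

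The remaining mixed pairs, $A_i$ with $B_j$ and $A_{i}$ or $A_{12}$ with $B_{12}$ or $B_i$, are where the exceptional cases appear. I expect the main obstacle to be this bookkeeping: the disjointness of, say, $A_1$ and $B_1$ requires $c_1\ge3$, and the overlap pattern depends on whether $c_1=2$ or $c_1=3$. For each such pair I would translate "both non-edges present" into conditions $x_i^2\in\min(M)$ together with a condition on $\max(M)$. I would then check the four cross pairs against the list, which pins $M$ down to a single principal multicomplex.

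I expect the following outcomes. $A_1$ with $B_2$ for $c_1=2$ should yield $M=\langle x_1x_2^{c_2-1}\rangle$: here $\min(M)\ni x_1^2,x_2^{c_2}$ and $\max(M)=\{x_1x_2^{c_2-1}\}$ gives $B_2$ when $c_1-2=0$. The dual pair gives $M^\vee=\langle x_1x_2^{c_2-1}\rangle$. $A_1$ with $B_1$ for $c_1=3$ should give the self-dual case $M=\langle x_1x_2^{c_2}\rangle$. In each case I would verify directly, as in the examples above, that the four vertices span a chordless $4$-cycle, and I would check that every other combination violates either disjointness or one of the cross-edge conditions. This gives both directions of the proposition.
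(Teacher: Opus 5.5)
Your reduction is sound and takes a genuinely different route from the paper's. The paper sorts a chordless $4$-cycle by how its vertices split between $\tilde X_1$ and $\tilde X_2$, which gives six configurations. It then disposes of each configuration either by a facet-exchange argument (cases (2) and (5)) or by reading off $I_{SR}$. You use only the quadratic part of $I_{SR}$. For $c_1,c_2\ge 2$ there are at most six non-edges $A_1,A_2,A_{12},B_1,B_2,B_{12}$, each governed by a single condition on $\min(M)$ or $\max(M)$. An induced $4$-cycle is then exactly a pair of disjoint non-edges none of whose cross pairs is on the list. This turns the proof into a finite mechanical check that handles both directions at once, with no facet arguments. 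However, several of the outcomes you predict for the mixed pairs are wrong and must be corrected when you carry out the check.

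\textbf{The $c_1=2$ case comes from the wrong pair.} The pair $(A_1,B_2)$ can never occur: $A_1$ needs $x_1^2\notin M$, whereas $B_2$ needs $x_1^{c_1}x_2^{c_2-2}\in\max(M)$, which is divisible by $x_1^2$. Dually, $(A_2,B_1)$ never occurs. For $M=\langle x_1x_2^{c_2-1}\rangle$ with $c_1=2$, the generator $x_1x_2^{c_2-1}=x_1^{c_1-1}x_2^{c_2-1}$ produces $B_{12}$, not $B_2$.
\begin{itemize}
\item The case $c_1=2$ actually comes from $(A_1,B_{12})$. Here $x_1^2\notin M\ni x_1^{c_1-1}x_2^{c_2-1}$ forces $c_1=2$. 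The cross pair $\{\gen{x}{1}{1},\gen{x}{1}{2}\}$ then coincides with $B_1$, so you must also require $x_2^{c_2}\notin\max(M)$. This rules out $M=\langle x_1x_2^{c_2-1},x_2^{c_2}\rangle$ and leaves the cycle $\gen{x}{1}{0},\gen{x}{1}{2},\gen{x}{1}{1},\gen{x}{2}{c_2}$ of the paper's case (3).
\item The dual case $M^\vee=\langle x_1x_2^{c_2-1}\rangle$, i.e.\ $M=\langle x_1^2,x_2^{c_2}\rangle$, comes from $(A_{12},B_1)$, which is the paper's case (4).
\end{itemize}

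\textbf{Your reason in the $(A_{12},B_{12})$ case is false.} $x_1x_2\in\min(M)$ does not exclude degree $2$; for example, $x_1^2\in\langle x_1^{c_1},x_2^{c_2}\rangle$. The correct reason is simply that $x_1x_2\notin M$ divides $x_1^{c_1-1}x_2^{c_2-1}$.

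\textbf{``Every other combination fails'' is not literally true.} The mirror pairs $(A_2,B_2)$ with $c_2=3$, and $(A_2,B_{12})$, $(A_{12},B_2)$ with $c_2=2$, give genuine chordless $4$-cycles. They arise for $M=\langle x_1^{c_1}x_2\rangle$, and for $M$ or $M^\vee$ equal to $\langle x_1^{c_1-1}x_2\rangle$; for example, $c=(3,2)$ and $M=\langle x_1^2x_2\rangle$. These are not on the list as written. You should state explicitly, as the paper does implicitly (``up to symmetries''), that the classification is up to interchanging $x_1$ and $x_2$.

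With these corrections, your enumeration gives a complete proof.
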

\begin{proof}
Suppose that $\Bier_c(M)$ is not chordal. By \Cref{Chordless4CycleLemma}, the latter is the case if and only if the Murai sphere has a chordless cycle of length 4. Up to symmetries, only the following cases can occur:

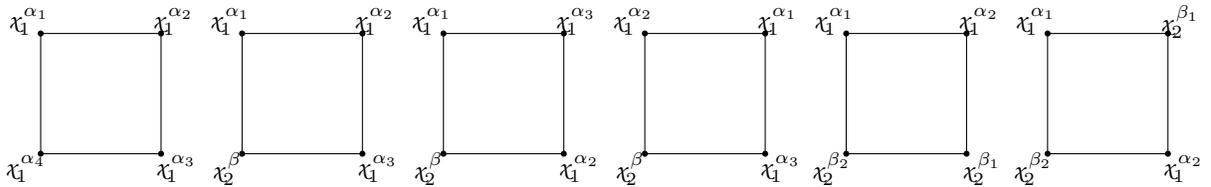
\begin{figure}[h]
\begin{tikzpicture}[scale=0.8]
    \clip (-0.6,-0.6) rectangle (2.6,2.6);
    \draw (0,0) rectangle (2,2);
    \fill (0,0) circle (0.05) (2,0) circle (0.05) (0,2) circle (0.05) (2,2) circle (0.05);
    \draw (0,2)+(-0.2,0.2) node{$\gen{x}{1}{\alpha_1}$}
          (2,2)+(0.2,0.2) node{$\gen{x}{1}{\alpha_2}$}
          (2,0)+(0.25,-0.25) node{$\gen{x}{1}{\alpha_3}$}
          (0,0)+(-0.25,-0.25) node{$\gen{x}{1}{\alpha_4}$};
\end{tikzpicture}
\begin{tikzpicture}[scale=0.8]
    \clip (-0.6,-0.6) rectangle (2.6,2.6);
    \draw (0,0) rectangle (2,2);
    \fill (0,0) circle (0.05) (2,0) circle (0.05) (0,2) circle (0.05) (2,2) circle (0.05);
    \draw (0,2)+(-0.2,0.2) node{$\gen{x}{1}{\alpha_1}$}
          (2,2)+(0.2,0.2) node{$\gen{x}{1}{\alpha_2}$}
          (2,0)+(0.25,-0.25) node{$\gen{x}{1}{\alpha_3}$}
          (0,0)+(-0.25,-0.25) node{$\gen{x}{2}{\beta}$};
\end{tikzpicture}   
\begin{tikzpicture}[scale=0.8]
    \clip (-0.6,-0.6) rectangle (2.6,2.6);
    \draw (0,0) rectangle (2,2);
    \fill (0,0) circle (0.05) (2,0) circle (0.05) (0,2) circle (0.05) (2,2) circle (0.05);
    \draw (0,2)+(-0.2,0.2) node{$\gen{x}{1}{\alpha_1}$}
          (2,2)+(0.2,0.2) node{$\gen{x}{1}{\alpha_3}$}
          (2,0)+(0.25,-0.25) node{$\gen{x}{1}{\alpha_2}$}
          (0,0)+(-0.25,-0.25) node{$\gen{x}{2}{\beta}$};
\end{tikzpicture}   
\begin{tikzpicture}[scale=0.8]
    \clip (-0.6,-0.6) rectangle (2.6,2.6);
    \draw (0,0) rectangle (2,2);
    \fill (0,0) circle (0.05) (2,0) circle (0.05) (0,2) circle (0.05) (2,2) circle (0.05);
    \draw (0,2)+(-0.2,0.2) node{$\gen{x}{1}{\alpha_2}$}
          (2,2)+(0.2,0.2) node{$\gen{x}{1}{\alpha_1}$}
          (2,0)+(0.25,-0.25) node{$\gen{x}{1}{\alpha_3}$}
          (0,0)+(-0.25,-0.25) node{$\gen{x}{2}{\beta}$};
\end{tikzpicture}
\begin{tikzpicture}[scale=0.8]
    \clip (-0.6,-0.6) rectangle (2.6,2.6);
    \draw (0,0) rectangle (2,2);
    \fill (0,0) circle (0.05) (2,0) circle (0.05) (0,2) circle (0.05) (2,2) circle (0.05);
    \draw (0,2)+(-0.2,0.2) node{$\gen{x}{1}{\alpha_1}$}
          (2,2)+(0.2,0.2) node{$\gen{x}{1}{\alpha_2}$}
          (2,0)+(0.25,-0.25) node{$\gen{x}{2}{\beta_1}$}
          (0,0)+(-0.25,-0.25) node{$\gen{x}{2}{\beta_2}$};
\end{tikzpicture}
\begin{tikzpicture}[scale=0.8]
    \clip (-0.6,-0.6) rectangle (2.6,2.6);
    \draw (0,0) rectangle (2,2);
    \fill (0,0) circle (0.05) (2,0) circle (0.05) (0,2) circle (0.05) (2,2) circle (0.05);
    \draw (0,2)+(-0.2,0.2) node{$\gen{x}{1}{\alpha_1}$}
          (2,2)+(0.2,0.2) node{$\gen{x}{2}{\beta_1}$}
          (2,0)+(0.25,-0.25) node{$\gen{x}{1}{\alpha_2}$}
          (0,0)+(-0.25,-0.25) node{$\gen{x}{2}{\beta_2}$};
\end{tikzpicture}
\caption{Those are up to symmetry all possible cases for 4-cycle in a Murai sphere. In cases 2, 3 and 4, we have $\alpha_1<\alpha_2<\alpha_3$.}
\end{figure}

(1) The chordless cycle $Z$ has vertices $\gen{x}{i}{\alpha_1}$, $\gen{x}{i}{\alpha_2}$, $\gen{x}{i}{\alpha_3}$, $\gen{x}{i}{\alpha_4}$. Without loss of generality, we may assume that $i=1$. Moreover, application of Theorem~\ref{FaceIdealMuraiThm} shows that $Z=\{\gen{x}{1}{0}, \gen{x}{1}{c_1}, \gen{x}{1}{1}, \gen{x}{1}{c_1-1}\}$ with $c_1-1\geq 2$, labeled clockwise. This is equivalent to $x_1^2\in I_c(M)\cap I_c(M^\vee)$, which in turn is equivalent to 
$x_1^{c_1-2}x_2^{c_2}\in\max(M)$ and $x_1^2\in I_c(M)$.
It follows that $c_1=3$ and $M=\langle x_1x_2^{c_2}\rangle=M^\vee$.

(2) The chordless cycle $Z$ has vertices $\gen{x}{1}{\alpha_1}$, $\gen{x}{1}{\alpha_2}$, $\gen{x}{1}{\alpha_3}$, $\gen{x}{2}{\beta}$, and $\alpha_1<\alpha_2<\alpha_3$. Since $\{\gen{x}{1}{\alpha_1},\gen{x}{2}{\beta}\}$ is an edge and 
$\{\gen{x}{1}{\alpha_1},\gen{x}{1}{\alpha_3}\}$, $\{\gen{x}{1}{\alpha_2},\gen{x}{2}{\beta}\}$ are not edges, there is a facet containing $\gen{x}{1}{\alpha_1}$, $\gen{x}{2}{\beta}$ and not containing $\gen{x}{1}{\alpha_2}$ and $\gen{x}{1}{\alpha_3}$. The facet is $G(x_1^{\alpha_2}x_2^\delta;x_1^{\alpha_3})$ for some $\delta\ne\beta$. Then $G(x_1^{\alpha_1}x_2^\delta;x_1^{\alpha_3})$ is a facet and contains $\gen{x}{1}{\alpha_2}$ and $\gen{x}{2}{\beta}$, so $Z$ is not chordless.

(3) The chordless cycle $Z$ has vertices $\gen{x}{1}{\alpha_1}$, $\gen{x}{1}{\alpha_3}$, $\gen{x}{1}{\alpha_2}$, $\gen{x}{2}{\beta}$, and $\alpha_1<\alpha_2<\alpha_3$. Using Theorem~\ref{FaceIdealMuraiThm}, it is easy to see that $Z=\{\gen{x}{1}{0}, \gen{x}{1}{c_1}, \gen{x}{1}{1}, \gen{x}{2}{c_2}\}$ with $c_1\geq 2$, labeled clockwise. This is equivalent to $x_1^2\in I_c(M)$ and $x_1x_2\in I_c(M^\vee)$. It follows that this is the case if and only if $c_1=2$ and $x_1^2\in I_c(M), x_1x_2^{c_2-1}\in\max(M)$. If $M=\langle x_1x_2^{c_2-1}, x_2^{c_2}\rangle$, then $M^\vee=\langle x_1, x_2^{c_2}\rangle$, hence $I_c(M^\vee)=(x_1x_2, x_1^2)$, a contradiction with $\{\gen{x}{1}{1}, \gen{x}{1}{2}\}$ being an edge. Therefore, the only possibility is $M=\langle x_1x_2^{c_2-1}\rangle$ and $c_1=2$.

(4) The chordless cycle $Z$ has vertices $\gen{x}{1}{\alpha_2}$, $\gen{x}{1}{\alpha_1}$, $\gen{x}{1}{\alpha_3}$, $\gen{x}{2}{\beta}$, and $\alpha_1<\alpha_2<\alpha_3$. Using Theorem~\ref{FaceIdealMuraiThm}, it is easy to see that $Z=\{\gen{x}{2}{0}, \gen{x}{1}{c_1-1}, \gen{x}{1}{0}, \gen{x}{1}{c_1}\}$ with $c_1\geq 2$, labeled clockwise. Similarly to the previous case, we deduce $c_1=2$ and $M^\vee=\langle x_1x_2^{c_2-1}\rangle$.

(5) The chordless cycle $Z$ has vertices $\gen{x}{1}{\alpha_2}$, $\gen{x}{1}{\alpha_1}$, $\gen{x}{2}{\beta_1}$, $\gen{x}{2}{\beta_2}$, and $\alpha_1<\alpha_2$. Since $\{\gen{x}{2}{\beta_1},\gen{x}{2}{\beta_2}\}$ is an edge in a chordless cycle, there exists $\delta$ such that $\gen{x}{2}{\beta_1},\gen{x}{2}{\beta_2}\in G(x_1^{\alpha_1}x_2^\delta;x_1^{\alpha_2})$. Since $c_1\ge 2$, there is $\gamma\in\{0,\ldots,c_2\}\setminus\{\alpha_1,\alpha_2\}$. If $x_1^\gamma x_2^\delta\in M$ then $\gen{x}{1}{\alpha_1},\gen{x}{2}{\beta_1}\in G(x_1^\gamma x_2^\delta;x_1^{\alpha_2})$ which is impossible, since $Z$ has no chords. If $x_1^\gamma x_2^\delta\not\in M$ then $\gen{x}{1}{\alpha_2},\gen{x}{2}{\beta_2}\in G(x_1^{\alpha_1} x_2^\delta;x_1^\gamma)$ which is again impossible.

(6) The chordless cycle $Z$ has vertices $\gen{x}{1}{\alpha_2}$, $\gen{x}{2}{\beta_1}$, $\gen{x}{1}{\alpha_1}$, $\gen{x}{2}{\beta_2}$. 
Using the symmetries of the square, we can assume that $\alpha_1 < \alpha_2$ and $\beta_1 < \beta_2$. 
Since $\{\gen{x}{1}{\alpha_1},\gen{x}{1}{\alpha_2}\}, \{\gen{x}{2}{\beta_1},\gen{x}{2}{\beta_2}\}\in\min(\Bier_c(M))$, we have $m_1, m_2\in I_{SR}(\Bier_c(M))$ for $m_1:=x_{1,\alpha_1}x_{1,\alpha_2}$ and $m_2:=x_{2,\beta_1}x_{2,\beta_2}$.
Since $c_1\geq c_2\geq 2$, applying Theorem~\ref{FaceIdealMuraiThm}, we see that 
$m_1,m_2\in \pol(I_c(M))+\pol^*(I_c(M^\vee))$. It follows that either $\alpha_1=0, \alpha_2=1$ or $\alpha_1=c_1-1, \alpha_2=c_1$ and at the same time we have either $\beta_1=0, \beta_2=1$ or $\beta_1=c_2-1, \beta_2=c_2$. Using the symmetry between $M$ and $M^\vee$, we arrive at the next two possible cases.

a) $x_1^2\in I_c(M)$ and $x_2^2\in I_c(M^\vee)$. Due to Proposition~\ref{MinimalNonMonmomialsProp}, we have $(x_2^2)^c=x_1^{c_1}x_2^{c_2-2}\in\max(M)$, which divides $x_1^2\in I_c(M)$, a contradiction. 

b) $x_1^2, x_2^2\in I_c(M)$. By definition of the multicomplex ideal, this implies $M=\langle x_1x_2\rangle$. Hence, $I_c(M)=(x_1^2,x_2^2)$. Moreover, $M^\vee=\langle (x_1^2)^c,(x_2^2)^c\rangle=\langle x_1^{c_1-2}x_2^{c_2},x_{1}^{c_1}x_{2}^{c_2-2}\rangle$ and therefore $I_c(M^\vee)=((x_1x_2)^c)=(x_1^{c_1-1}x_2^{c_2-1})$. Thus, the Stanley--Reisner ideal of $\Bier_c(M)$ equals
$I_{SR}(\Bier_c(M)) = (x_{1,0}x_{1,1},x_{2,0}x_{2,1},x_{1,c_1}\ldots x_{1,2}x_{2,c_2}\ldots x_{2,2})$. This shows that $\Bier_c(M)=Z_4\ast\partial\Delta^{c_1+c_2-3}$. It is non-chordal and has an induced chordless 4-cycle on the vertex set $\{\gen{x}{1}{0},\gen{x}{1}{1},\gen{x}{2}{0},\gen{x}{2}{1}\}$. This finishes the proof.
\end{proof}

Thus, in the case $m=2$ we have the following classification of chordal Murai spheres $\Bier_c(M)$ with $c\in\N^m$.

\begin{theorem}
A Murai sphere $\Bier_{(c_1,c_2)}(M)$ is not chordal if and only if one of the following conditions holds:
\begin{enumerate}
\item $(c_1,c_2)=(2,1)$ and either $M=\langle x_1\rangle$, or $M^\vee=\langle x_1\rangle$;
\item $(c_1,c_2)=(2,1)$ and either $M=\langle x_1,x_2\rangle$, or $M^\vee=\langle x_1,x_2\rangle$;
\item $c_1\geq 2, c_2=1$ and either $M=\langle x_1x_2\rangle$, or $M^\vee=\langle x_1x_2\rangle$;
\item $c_1,c_2\geq 2$ and either $M=\langle x_1 x_2\rangle$, or $M^\vee=\langle x_1x_2\rangle$;
\item $c_1=2, c_2\geq 2$ and either $M=\langle x_1 x_2^{c_2-1}\rangle$, or $M^\vee=\langle x_1 x_2^{c_2-1}\rangle$;
\item $c_1=3, c_2\geq 2$ and $M=\langle x_1 x_2^{c_2}\rangle=M^\vee$.
\end{enumerate}
\end{theorem}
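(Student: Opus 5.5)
This theorem is obtained by assembling the two propositions already proved for $m=2$, after normalizing the ordering of the coordinates. Since $\Bier_{(c_1,c_2)}(M)$ is isomorphic to $\Bier_{(c_2,c_1)}(M')$, where $M'$ is obtained from $M$ by swapping $x_1$ and $x_2$, we may assume without loss of generality that $c_1\ge c_2$. The same relabeling identifies $M^\vee$ with $(M')^\vee$, so the stated list is to be read up to this symmetry.

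We split into three cases according to the values of $c_1\ge c_2$.

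\emph{Case $c_1=c_2=1$.} Here $\Bier_c(M)\cong\Bier(K_M)$ is a Bier sphere on at most four vertices, i.e. a $0$-sphere. It has no cycles at all, so it is chordal. This is consistent with the list, since none of items (1)--(6) applies when $c_1=1$.

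\emph{Case $c_1\ge 2$, $c_2=1$.} This is exactly Proposition~\ref{ChordalSpheresWithSecondCorrdinateOneProp}. Its three bullets give items (1), (2) and (3) verbatim.

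\emph{Case $c_1,c_2\ge 2$.} This is exactly Proposition~\ref{ChordalSpheresWithCoordinatesGreaterThanTwoProp}, whose three bullets give items (4), (5) and (6).

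Two points need care in this last case. First, Proposition~\ref{ChordalSpheresWithCoordinatesGreaterThanTwoProp} is stated without assuming $c_1\ge c_2$, although its case analysis used that ordering in case (6). After our relabeling with $c_1\ge c_2$, it applies directly. The conditions $c_1=2$ or $c_1=3$ in items (5) and (6) then describe the relevant coordinate once the symmetric situation is absorbed by the swap.

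Second, in every case the list must be closed under $M\leftrightarrow M^\vee$, because $\Bier_c(M)\cong\Bier_c(M^\vee)$. Each item is indeed stated as ``$M=\ldots$ or $M^\vee=\ldots$'', and in item (6) the multicomplex is self-dual. So no additional cases arise from duality.

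The only real obstacle is this bookkeeping of symmetries: one must check that the swap of coordinates combined with Alexander duality does not produce non-chordal cases missing from the list. For instance, $c=(1,2)$ with $M=\langle x_2\rangle$ corresponds under the swap to $c=(2,1)$ with $M=\langle x_1\rangle$, which is item (1). I would verify this by observing that the swap commutes with $\vee$ and carries each listed multicomplex to the corresponding one with indices exchanged. This completes the classification.
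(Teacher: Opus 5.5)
Your proposal is correct and follows the paper's proof, which likewise just observes that the list is the union of Propositions~\ref{ChordalSpheresWithSecondCorrdinateOneProp} and~\ref{ChordalSpheresWithCoordinatesGreaterThanTwoProp}; your remarks on the trivial $0$-sphere case $c=(1,1)$ and on reading the list up to the swap $x_1\leftrightarrow x_2$ are points the paper leaves implicit. One small caution: normalizing to $c_1\ge c_2$ does not by itself make items (5)--(6) literally applicable. For example, $c=(5,2)$ with $M=\langle x_1^4x_2\rangle$ is non-chordal, being the swap of item (5), so the swap must be applied to the list itself, as you ultimately say.
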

\begin{proof}
It suffices to observe that the statement is covered by Proposition~\ref{ChordalSpheresWithSecondCorrdinateOneProp} and Proposition~\ref{ChordalSpheresWithCoordinatesGreaterThanTwoProp}.    
\end{proof}

We finish this section with a classification of all the types of stacked Murai spheres.

\begin{example}\label{StackedWithC2EqualToOne}
Here is a list of all stacked spheres of the type $\Bier_c(M)$ with $c_1\geq 3$ and $c_2=1$. In what follows, we denote by $P_M$ the combinatorial simple polytope such that $\Bier_c(M)=\partial P^*_M$.
\begin{enumerate}
\item $M=\langle 1\rangle$. Then $I_c(M)=(x_1,x_2)$, $I_c(M^\vee)=(x_1^{c_1}x_2)$ and 
$I_{SR} = (x_{1,0},x_{2,0},x_{1,1}x_{1,2}\ldots x_{1,c_1}x_{2,1})$. Hence the sphere $\Bier_c(M)=\partial\Delta^{c_1}$ and $P_M = \Delta^{c_1}$.
\item $M=\langle x_1\rangle$. Then $I_c(M)=(x^2_1,x_2)$, $I_c(M^\vee)=(x_1^{c_1-1}x_2)$ and 
$I_{SR}=(x_{1,0}x_{1,1},x_{2,0},x_{1,2}x_{1,3}\ldots x_{1,c_1}x_{2,1})$. Hence the sphere $\Bier_c(M)=\partial\Delta^{c_1-1}\ast\partial\Delta^{1}$ and $P_M \in \vc^1(\Delta^{c_1})$.
\item $M=\langle x_2\rangle$. Then $I_c(M)=(x_1)$, $I_c(M^\vee)=(x_1^{c_1})$ and 
$I_{SR} = (x_{1,0},x_{1,1}x_{1,2}\ldots x_{1,c_1},x_{2,0}x_{2,1})$. Hence the sphere $\Bier_c(M)=\partial\Delta^{c_1-1}\ast\partial\Delta^{1}$ and $P_M \in \vc^1(\Delta^{c_1})$.
\item $M=\langle x_1, x_2\rangle$. Then $I_c(M)=(x^2_1,x_1x_2)$, $I_c(M^\vee)=(x_1^{c_1-1}x_2, x_1^{c_1})$ and 
$$
I_{SR}= (x_{1,0}x_{1,1},x_{1,0}x_{2,0}, x_{1,2}x_{1,3}\ldots x_{1,c_1}x_{2,1},x_{1,1}x_{1,2}\ldots x_{1,c_1},x_{2,0}x_{2,1}).
$$
Hence the sphere $P_M \in \vc^2(\Delta^{c_1})$.
\item $M=\langle x_1^{c_1-1}\rangle$. Then $I_c(M)=(x_1^{c_1},x_2)$, $I_c(M^\vee)=(x_1x_2)$ and $I_{SR}= (x_{1,0}x_{1,1}\ldots x_{1,c_1-1},x_{2,0},x_{1,c_1}x_{2,1})$. Hence the sphere $\Bier_c(M)=\partial\Delta^{c_1-1}\ast\partial\Delta^{1}$ and $P_M \in \vc^1(\Delta^{c_1})$.
\item $M=\langle x_1^{c_1-1},x_2\rangle$. Then $I_c(M)=(x_1^{c_1},x_1x_2)$, $I_c(M^\vee)=(x_1^{c_1},x_1x_2)$ and 
$$
I_{SR}= (x_{1,0}x_{1,1}\ldots x_{1,c_1-1},x_{1,0}x_{2,0},x_{1,1}x_{1,2}\ldots x_{1,c_1},x_{1,c_1}x_{2,1},x_{2,0}x_{2,1}).
$$
Hence the sphere $P_M \in \vc^2(\Delta^{c_1})$.
\item $M=\langle x_1^{c_1-1}, x_{1}^{c_1-2}x_2\rangle$. Then $I_c(M)=(x_1^{c_1},x_1^{c_1-1}x_2)$, $I_c(M^\vee)=(x_1^2,x_1x_2)$ and 
$$
I_{SR}= (x_{1,0}x_{1,1}\ldots x_{1,c_1-1},x_{1,0}x_{1,1}\ldots x_{1,c_1-2}x_{2,0},x_{1,c_1-1}x_{1,c_1},x_{1,c_1}x_{2,1},x_{2,0}x_{2,1}).
$$
Hence the sphere $P_M \in \vc^2(\Delta^{c_1})$.
\item $M=\langle x_1^{c_1}\rangle$. Then $I_c(M)=(x_2)$, $I_c(M^\vee)=(x_2)$ and
$I_{SR} = (x_{2,0},x_{2,1},x_{1,0}x_{1,1}\ldots x_{1,c_1})$. Hence the sphere $\Bier_c(M)=\partial\Delta^{c_1}$ and $P_M = \Delta^{c_1}$.
\item $M=\langle x_1^{c_1}, x_2\rangle$. Then $I_c(M)=(x_1x_2)$, $I_c(M^\vee)=(x_1^{c_1},x_2)$ and 
$I_{SR} = (x_{1,0}x_{2,0},x_{1,1}x_{1,2}\ldots x_{1,c_1},x_{2,1})$. Hence the sphere $\Bier_c(M)=\partial\Delta^{c_1-1}\ast\partial\Delta^{1}$ and $P_M \in \vc^1(\Delta^{c_1})$.
\item $M=\langle x_1^{c_1}, x_1^{c_1-2}x_2\rangle$. Then $I_c(M)=(x_1^{c_1-1}x_2)$, $I_c(M^\vee)=(x_1^{2},x_2)$ and 
$$
I_{SR} = (x_{1,0}x_{1,1}\ldots x_{1,c_1-2}x_{2,0}, x_{1,c_1-1}x_{1,c_1},x_{2,1}).
$$
Hence the sphere $\Bier_c(M)=\partial\Delta^{c_1-1}\ast\partial\Delta^{1}$ and $P_M \in \vc^1(\Delta^{c_1})$.
\item $M=\langle x_1^{c_1}, x_1^{c_1-1}x_2\rangle$. Then $I_c(M)=(x_1^{c_1}x_2)$, $I_c(M^\vee)=(x_1,x_2)$ and 
$$
I_{SR} = (x_{1,0}x_{1,1}\ldots x_{1,c_1-1}x_{2,0},x_{1,c_1},x_{2,1}).
$$
Hence the sphere $\Bier_c(M)=\partial\Delta^{c_1}$ and $P_M = \Delta^{c_1}$.
\end{enumerate}
\end{example}

\begin{proposition}
Let $c\in\N^2$ with $c_1\geq 3$ and $c_2=1$. Then $\Bier_c(M)$ is stacked if and only if it is one of the spheres described in Example~\ref{StackedWithC2EqualToOne}. In particular, in this case $P_M \in \vc^{k}(\Delta^{c_1})$ with $k=0,1,2$.
\end{proposition}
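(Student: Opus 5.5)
We classify all proper $c$-multicomplexes for $c=(c_1,1)$ and check stackedness case by case. Since $c_2=1$, every $c$-multicomplex $M$ is determined by two integers: $a=\max\{i: x_1^i\in M\}$ and, if $x_2\in M$, $b=\max\{i: x_1^ix_2\in M\}$ with $b\le a$. So $M$ has one of three forms, exactly as in the proof of Proposition~\ref{ChordalSpheresWithSecondCorrdinateOneProp}: $M=\langle x_1^k\rangle$, or $M=\langle x_1^k x_2\rangle$, or $M=\langle x_1^a,x_1^bx_2\rangle$ with $a>b\ge 0$.

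For each form we compute $I_{SR}(\Bier_c(M))$ from Theorem~\ref{FaceIdealMuraiThm}; this was already done in that proof. The spheres split as follows.

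\emph{First family, $M=\langle x_1^k\rangle$.} Here $\Bier_c(M)=\partial\Delta^k\ast\partial\Delta^{c_1-k}$, possibly with ghost vertices when $k=0$ or $k=c_1$.

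\emph{Second family, $M=\langle x_1^kx_2\rangle$.} Here $\Bier_c(M)=\partial\Delta^k\ast\partial\Delta^{c_1-k-1}\ast\partial\Delta^1$.

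\emph{Third family, $M=\langle x_1^a,x_1^bx_2\rangle$.} The ideal has the five generators displayed in that proof, and some of them may become variables when $b=0$, $a=c_1$ or $b=c_1-1$.

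The key criterion is the following. A stacked sphere of dimension $d\ge 2$ (here $d=c_1-1\ge 2$) is the boundary of a stacked polytope, so every minimal non-face has exactly two or $d+1$ vertices. Equivalently, its missing faces are the missing edges together with missing $d$-simplices. Also, a join $\partial\Delta^p\ast\partial\Delta^q$ with $p,q\ge 1$ is stacked iff $\min(p,q)=1$, since otherwise it has a minimal non-face of size between $3$ and $d$. More generally, a nontrivial join of three boundaries is never stacked when $d\ge 2$: either it has a missing face of intermediate size, or it is a join of two suspensions, hence contains an induced $4$-cycle and is not chordal.

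Applying this to each family:

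\emph{First family.} It is stacked iff $k\in\{0,1,c_1-1,c_1\}$. This gives items (1), (2), (5), (8) of Example~\ref{StackedWithC2EqualToOne}.

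\emph{Second family.} Removing the ghost factors $\partial\Delta^0$ (when $k=0$ or $k=c_1-1$), it is stacked only when $k=0$ or $k=c_1-1$. The case $k=0$ is item (3). The case $k=c_1-1$ gives $\partial\Delta^{c_1-1}\ast\partial\Delta^1$ and is item (9) up to the duality $\Bier_c(M)\cong\Bier_c(M^\vee)$.

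\emph{Third family.} We read off the sizes of minimal non-faces: $a+1$, $b+2$, $c_1-a+1$, $c_1-b$ and $2$. Each of these must be either $2$ or $c_1+1$, or else the corresponding generator must be non-minimal or become a variable (a ghost vertex) in the degenerate cases. This forces $a\in\{1,c_1-1,c_1\}$ and $b\in\{0,c_1-2,c_1-1\}$, with the constraints coming from $a>b$. Checking the finitely many pairs recovers items (4), (6), (7), (10), (11). For the non-join cases one still has to confirm stackedness. There are two ways to do this. One is to exhibit the sphere as a stellar subdivision of a facet of $\partial\Delta^{c_1-1}\ast\partial\Delta^1$, and to check that the new vertex's link is the boundary of a facet. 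The other is to use that a sphere whose minimal non-faces are only edges and $(d+1)$-sets, with the missing edges forming a tree-like pattern, is stacked. We would do the direct stellar-subdivision check, which identifies $P_M\in\vc^2(\Delta^{c_1})$ by counting $f_0=c_1+3$.

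The last assertion follows by counting vertices. For a stacked $d$-sphere, $k=f_0-(d+2)$, where $f_0$ is the number of non-ghost vertices, which ranges over $c_1+1,c_1+2,c_1+3$.

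The main obstacle is the third family: getting the degenerate boundary cases right without double counting under $M\leftrightarrow M^\vee$, and actually proving stackedness, as opposed to merely passing the non-face-size test, for items (4), (6), (7). I would handle the stackedness by explicitly writing each as a $2$-fold stellar subdivision of $\partial\Delta^{c_1}$.
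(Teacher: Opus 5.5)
Your argument is correct, but it takes a different route from the paper.

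\textbf{How the two routes differ.} The paper uses Kalai's characterization as an equivalence: a sphere is stacked iff it is chordal and every minimal non-face has size $1$, $2$ or at least $d+1$. Through Theorem~\ref{FaceIdealMuraiThm} this becomes the single condition $G(I_c(M)),G(I_c(M^\vee))\subseteq\{x_1,x_2,x_1x_2,x_1^2,x_1^{c_1},x_1^{c_1-1}x_2,x_1^{c_1}x_2\}$. Proposition~\ref{ChordalSpheresWithSecondCorrdinateOneProp} then removes $\langle x_1x_2\rangle$ and its dual.

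You use only the elementary implications as necessary conditions: stacked implies chordal, and stacked implies no missing faces of size $3,\dots,d$. You run these through the three families explicitly. You then prove sufficiency for the non-join survivors directly, as two stellar subdivisions of $\partial\Delta^{c_1}$. This works; for instance, for $M=\langle x_1,x_2\rangle$, subdivide $\{x_{1,1},\dots,x_{1,c_1}\}$ by $x_{2,0}$ and then $\{x_{1,2},\dots,x_{1,c_1},x_{2,1}\}$ by $x_{1,0}$. The resulting minimal non-faces are exactly the generators of $I_{SR}$ in item (4).

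\textbf{What each route buys.} The paper's proof is shorter. Yours avoids the nontrivial direction of Kalai's theorem. Your count $k=f_0-(d+2)$ with $c_1+1\le f_0\le c_1+3$ also gives $k\in\{0,1,2\}$ without inspecting the list. Your enumeration also surfaces $M=\langle x_1^{c_1-1}x_2\rangle$. It satisfies the paper's generator condition and gives a stacked sphere, but it is not literally in Example~\ref{StackedWithC2EqualToOne}; it is covered only via $\Bier_c(M)\cong\Bier_c(M^\vee)$.

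\textbf{Two bookkeeping slips to fix.}
\begin{itemize}
\item In the third family, the admissible sizes are $2$ or $c_1=d+1$. Size $c_1+1$ is allowed only when the sphere is $\partial\Delta^{c_1}$, and size $1$ corresponds to ghost vertices. You wrote ``$2$ or $c_1+1$'', but your resulting ranges $a\in\{1,c_1-1,c_1\}$ and $b\in\{0,c_1-2,c_1-1\}$ are the ones that come from the correct condition.
\item The dual $\langle x_1^{c_1-1}x_2\rangle^\vee=\langle x_2\rangle$ is item (3), not item (9). Item (9), $\langle x_1^{c_1},x_2\rangle$, is the pair $(a,b)=(c_1,0)$ of your third family. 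So that family yields six items, (4), (6), (7), (9), (10), (11), rather than the five you list.
\end{itemize}
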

\begin{proof}
Due to~\cite[Theorem 8.5]{Kalai}, $\Bier_c(M)$ is stacked if and only if it is chordal and $p\in I_{SR}(\Bier_c(M))$ implies $\deg(p)=1,2$ or is greater than $|c|-2=c_1-1$. In view of Proposition~\ref{FaceIdealMuraiThm}, this is equivalent to 
$$
G(I_c(M)),G(I_c(M^\vee))\subseteq\{x_1,x_2,x_1x_2,x_1^2,x_1^{c_1},x_1^{c_1-1}x_2,x_1^{c_1}x_2\}.
$$

Application of Proposition~\ref{ChordalSpheresWithSecondCorrdinateOneProp} excludes the cases where either $M$ or $M^{\vee}$ is equal to $\langle x_1x_2\rangle$. Therefore, we have the list of stacked spheres enumerated in Example~\ref{StackedWithC2EqualToOne}. This finishes the proof.
\end{proof}

Our final result in this section describes all possible types of truncation polytopes corresponding to stacked Murai spheres.

\begin{theorem}
Suppose $c\in\N^m$ with $m\geq 2$ and $M$ is a proper $c$-multicomplex. \begin{itemize}
\item[(a)] For each $k$ with $1\leq k\leq m$, consider $M=\langle x_1,\ldots,x_k\rangle$. Then $\Bier_c(M)$ is a stacked sphere and we have:
$$
P_M \in \vc^k(\Delta^{|c|-1}).
$$
\item[(b)] If $\Bier_c(M)$ is a stacked sphere and $P_M \in \vc^{k}(\Delta^{|c|-1})$, then $0\leq k\leq m$.
\end{itemize}
\end{theorem}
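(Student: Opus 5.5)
Part (b) is a vertex count; part (a) is a computation of the Stanley--Reisner ideal followed by the same stackedness criterion~\cite[Theorem 8.5]{Kalai} used in the preceding proposition.

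\emph{Counting fact.} $\Bier_c(M)$ is a $(|c|-2)$-sphere, so $n=|c|-1$. A stacked $(n-1)$-sphere obtained from $\partial\Delta^n$ by $k$ stellar subdivisions of facets has $n+1+k$ vertices. Hence if $\Bier_c(M)$ is stacked, then $P_M\in\vc^k(\Delta^{|c|-1})$ with $k=f_0(\Bier_c(M))-|c|$.

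\emph{Part (b).} By~\cite{Mu}, the number of (non-ghost) vertices of $\Bier_c(M)$ lies between $|c|$ and $|c|+m$. The counting fact then gives $0\le k\le m$ immediately.

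\emph{Part (a), computing the ideal.} Take $M=\langle x_1,\ldots,x_k\rangle$ and compute $I_{SR}(\Bier_c(M))$ via Theorem~\ref{FaceIdealMuraiThm}.
\begin{enumerate}
\item $G(I_c(M))$ consists of $x_i^2$ for $i\le k$ with $c_i\ge 2$, of $x_ix_j$ for $i<j\le k$, and of $x_l$ for $l>k$.
\item By Proposition~\ref{MinimalNonMonmomialsProp}, $G(I_c(M^\vee))=\{(x_i)^c\mid i\le k\}$, so each generator of $\pol^\ast(I_c(M^\vee))$ has degree $|c|-1$.
\item The polarization of $I_c(M)$ gives the quadratic non-faces $\{\gen{x}{i}{0},\gen{x}{i}{1}\}$ (for $i\le k$, $c_i\ge2$) and $\{\gen{x}{i}{0},\gen{x}{j}{0}\}$ (for $i<j\le k$). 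It also gives the ghost vertices $\gen{x}{l}{0}$ for $l>k$.
\item The generators $\pol(x_i^{c_i+1})$ are either redundant (divisible by $x_{i,0}x_{i,1}$ or by $x_{l,0}$) or are the quadratic $x_{i,0}x_{i,1}$ when $c_i=1$.
\end{enumerate}
So there are exactly $m-k$ ghost vertices, $f_0=|c|+k$, and every minimal non-face has degree $1$, $2$, or $|c|-1>|c|-2$.

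\emph{Part (a), chordality.} The non-edge graph $N$ consists of a clique on $A=\{\gen{x}{i}{0}\mid i\le k\}$ plus pendant edges $\gen{x}{i}{0}\gen{x}{i}{1}$; the 1-skeleton is the complement of $N$. A chordless cycle of length $\ge4$ in a graph $G$ is a cycle whose non-consecutive vertices are non-adjacent. For length $4$ this means the two diagonals are non-edges of $G$, i.e.\ form a $2K_2$ in $N$ with no other $N$-edges between them. Any two edges of $N$ either share a vertex or have endpoints in $A$ joined by an $N$-edge, so no induced $2K_2$ exists. For length $\ge5$, the complement of a chordless cycle contains a chordless cycle of length $\ge4$ or is itself a $C_5$; I expect $N$ (a clique with pendants, a split graph) to contain neither, so the sphere is chordal.

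\emph{Conclusion and main obstacle.} Kalai's criterion then shows $\Bier_c(M)$ is stacked, and the counting fact gives $P_M\in\vc^k(\Delta^{|c|-1})$. The main obstacle is the applicability of Kalai's theorem in low dimension, which needs $|c|-1\ge 4$ or so. For small $|c|$ (dimension $\le2$) I will instead read off stackedness from the classification in Section~3: for instance $\Bier(\ast\sqcup\cdots)$ types, and items (24) and analogous ones. A fallback is a direct stellar-subdivision induction on $k$, checking that passing from $\langle x_1,\ldots,x_{k-1}\rangle$ to $\langle x_1,\ldots,x_k\rangle$ turns a ghost vertex into a real one by subdividing a single facet.
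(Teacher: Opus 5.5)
Your argument is correct, but it takes a different route from the paper in both parts.

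\textbf{Part (b).} The paper uses Kalai's characterization to confine the degrees of minimal non-faces to $\{1,2,|c|-1,|c|\}$. It then attributes each glued pyramid to a degree-$(|c|-1)$ generator: some $(x_i)^c$ lying in $I_c(M)$ or $I_c(M^\vee)$, or $\pol(x_i^{c_i+1})$ when $c_i+1=|c|-1$. Finally it bounds the number of these by $m$ through a four-case analysis. Your count $k=f_0(\Bier_c(M))-|c|\le|\tilde X|-|c|=m$ makes all of this unnecessary. The upper bound is just $V(\Bier_c(M))\subseteq\tilde X$, so you do not even need Murai's result.

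\textbf{Part (a).} The paper does not invoke Kalai here. It reads the combinatorial type directly off the ideal. Made precise, the sphere is the boundary of the simplex on $W=\{\gen{x}{j}{t}\mid 1\le j\le m,\ 1\le t\le c_j\}$, with the $k$ distinct facets $W\setminus\{\gen{x}{i}{1}\}$, $i\le k$, stellarly subdivided by the new vertices $\gen{x}{i}{0}$. This is exactly your fallback. That route is uniform in dimension and gives the finer type (all pyramids over distinct facets of the original simplex, as in item (24)). Yours needs the chordality criterion and therefore a separate treatment of low dimension.

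Your chordality step is fine. The ``I expect'' is immediate: $N$ is a clique with pendant edges, so the $1$-skeleton is a split graph. The criterion also already applies for $|c|=4$, since chordal triangulations of $S^2$ are stacked. So only $|c|=3$ is genuinely special.

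In that case your description of $N$ is in fact false, because the generators $\pol^\ast((x_i)^c)$ become quadratic. For example, $\Bier_{(2,1)}(\langle x_1,x_2\rangle)=Z_5$ is not chordal. There you must use that every polygon is a truncation polytope, rather than chordality.

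For $c=(1,1)$ these generators are even linear. Your ghost count $m-k$ then fails, and the statement itself degenerates to $S^0$.
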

\begin{proof}
To prove statement (a), note that the Stanley--Reisner ideal for $\Bier_c(M)$ has the form:
$$
I_{SR}=\pol(x_p^2,x_ix_j,x_q\,|\,1\leq p \leq k, k+1\leq q\leq m, 1\leq i < j\leq k) + \pol^*((x_p)^c\,|\,1\leq p\leq k).
$$
It is easy to see that $P_M$ is combinatorially equivalent to the simplex on the vertex set $\tilde{X}_1\cup\ldots\cup\tilde{X}_k$ to which $k$ pyramids are glued over the facets corresponding to the monomials of the form $(x_p)^c$ for $1\leq p\leq k$.

To prove statement (b), note that by~\cite[Theorem 8.5]{Kalai} the degrees of monomials in $G(I_{SR}(\Bier_c(M)))$ belong to the set $\{1,2,|c|-1,|c|\}$. Moreover, a minimal monomial generator of degree $|c|$ exists if and only if either $M$ or $M^\vee$ equals $\langle 1\rangle$. 

Each pyramid over a facet of the simplex $\Delta^{|c|-1}$ corresponds to an element of degree $|c|-1$ in $G(I_{SR}(\Bier_c(M)))$; that is, to an element of the form $(x_i)^c$ belonging to either $I_c(M)$ (that is the case if and only if $x_i\in\max(M^\vee)$) or $I_c(M^\vee)$ (that is the case if and only if $x_i\in\max(M)$), or to an element of the form $x_i^{c_i+1}$ whenever $c_i+1=|c|-1$. In the latter case, we may assume that $i=1$ and either $c=(c_1,2)$ or $c=(c_1,1,1)$.

Observe that the case $(x_i)^c\in I_c(M)\cap I_c(M^\vee)$ is equivalent to either $c_i=1$ and $\max(M)=\{x_i,x^{c-e_i-e_j}\,|\,1\leq j\neq i\leq m\}$, or $c_i\geq 2$ and $(x_i)^c=x_ix_j$ for some $1\leq j\neq i\leq m$. The latter is the case if and only if $c=(2,1)$ and $\max(M)=\{x_1,x_2\}$.

Let us consider each of the 4 particular cases we found above: 

\begin{itemize}
\item $c=(c_1,2), c_1\geq 2$ and $x_1^{c_1+1}\in G(I_{SR}(\Bier_c(M)))$.
It follows that $I_c(M), I_c(M^\vee)\subseteq\{x_1x_2, x_2, x_2^2\}$. Therefore, $G(I_{SR}(\Bier_c(M)))$ contains only one monomial of degree $|c|-1\geq 3$, hence $k\leq 1$.

\item $c=(c_1,1,1), c_1\geq 1$ and $x_1^{c_1+1}\in G(I_{SR}(\Bier_c(M)))$.
If $c_1=1,2$, then the result follows from the classification of all Murai spheres in dimensions 1 and 2. Assume that $c_1\geq 3$. It follows that $I_c(M), I_c(M^\vee)\subseteq\{x_1x_2, x_1x_3, x_2x_3, x_2, x_3\}$. Therefore, $G(I_{SR}(\Bier_c(M)))$ contains only one monomial of degree $|c|-1=c_1+1\geq 4$, hence $k\leq 1$.

\item $c=(2,1)$ and $\max(M)=\{x_1,x_2\}$.
Then $I_c(M)=(x_1^2, x_1x_2)$, $I_c(M^\vee)=(x_1x_2,x_1^2)$ and $P_M\in\vc^2(\Delta^2)$, hence $k=2$.

\item $c_i=1$ and $\max(M)=\{x_i,x^{c-e_i-e_j}\,|\,1\leq j\neq i\leq m\}$.
Then $I_c(M)=((x_i)^c,x_ix_j\,|\,1\leq i\neq j\leq m)$ and $I_c(M^\vee)=((x_i)^c,x_ix_j\,|\,1\leq j\neq i\leq m)$. The cases $m\leq 2$, $|c|=c_i+1$ for some $1\leq i\leq m$, and $|c|\leq 4$ have already been considered above, hence we can assume that $m\geq 3$, $|c|-1 > 3, c_i+1$ for all $1\leq i\leq m$. Therefore $G(I_{SR}(\Bier_c(M)))$ contains only two monomials of degree $|c|-1\geq 4$, hence $k\leq 2$. 
\end{itemize}

Finally, if none of the above cases holds, $k$ does not exceed $|\{x_1,x_2,\ldots,x_m\}|=m$. This finishes the proof.
\end{proof}

\section{Buchstaber and chromatic numbers}

In this section, we discuss the Buchstaber and chromatic numbers of Murai spheres. These combinatorial invariants are closely related to the topological properties of polyhedral products studied in the framework of toric topology~\cite{BP02, TT}.

\begin{definition}
Let $K$ be a simplicial complex on $[m]$ with $m$ real vertices. We call the {\emph{complex Buchstaber number}} of $K$ the maximal integer $r$ such that there exists a {\emph{characteristic map}} $\Lambda\colon [m]\to \Z^{m-r}$. We denote this number by $s(K)$. Each simplex of $K$ is mapped by $\Lambda$ to a part of a lattice basis of $\Z^{m-r}$. 

Similarly, given a prime number $p\in\N$, we call the {\emph{mod $p$ Buchstaber number}} of $K$ the maximal integer $r$ such that there exists a {\emph{mod $p$ characteristic map}} $\Lambda_p\colon [m]\to \Z_p^{m-r}$. We denote this number by $s_{p}(K)$. Each simplex of $K$ is mapped by $\Lambda_p$ to a linearly independent set of vectors in the $\Z_p$-vector space $\Z_p^{m-r}$.
\end{definition}

\begin{remark}
Observe that this definition is equivalent to~\cite[Definition 5.3]{BVV}. In the case $p=2$, it is equivalent to $s_\R(K)$ being equal to the maximal rank of a real toric subgroup in $\Z_2^{m}$ acting freely on the real moment-angle-complex $\rk$, see~\cite{Ayz10,E09,Er14,FM}.
\end{remark}

From the above definition, it follows that both $s(K)$ and $s_p(K)$ are combinatorial invariants of $K$ for any prime $p\in\N$.
Buchstaber invariant of a simplicial complex is known to be closely related to its chromatic number. Recall that given a simple graph $\Gamma=(V,E)$, its \emph{chromatic number} $\chi(\Gamma)$ equals the minimal number of colors needed for a proper coloring of $V$; that is, vertices of each edge in $E$ are colored in different colors. Now we define $\chi(K)$ to be equal to the chromatic number of the 1-skeleton $\sk^1(K)$ of a simplicial complex $K$.

Firstly, given a simplicial complex $K$ with $f_0(K)=m$ real vertices and the dimension $\dim(K)=n-1$, the basic inequality $s(K)\leq m-n$ is well-known; see~\cite{IZ01}. In the same paper, Buchstaber and chromatic numbers were linked by inequality $m-\chi(K)\leq s(K)$. Secondly, for any simplicial complex, the real Buchstaber number serves as an upper bound for the complex Buchstaber number: $s(K)\leq s_2(K)$, see~\cite[Fact 15, p.5]{E09} as well as~\cite{Ayz10, FM, Er14}. Finally, for every prime $p\neq 2$, this is proved by literally the same argument, see~\cite{BVV}, where the mod $p$ Buchstaber invariant was introduced and studied, and therefore one obtains the chain of inequalities:
$$
m-\chi(K)\leq s(K)\leq s_p(K)\leq m-n\text{ for any prime $p\in\N$}.
$$

It implies immediately that for any proper $c$-multicomplex $M$ the following inequalities hold:
$$
|c|-1\leq r_2(\Bier_c(M))\leq r(\Bier_c(M))\leq \chi(\Bier_c(M))\leq |\bar{c}|=|c|+m,
$$
where $r_p(K):=f_0(K)-s_p(K)$ and $r(K):=f_0(K)-s(K)$. Our next result shows that these estimates can be improved.

\begin{theorem}
Let $\Bier_c(M)$ be a Murai sphere. Then its Buchstaber numbers satisfy inequalities:
$$
f_0(\Bier_c(M))-|c|\leq s(\Bier_c(M))\leq s_p(\Bier_c(M))\leq f_0(\Bier_c(M))-|c|+1\text{ for any prime $p\in\N$}.
$$
\end{theorem}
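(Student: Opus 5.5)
The plan is to treat the three inequalities separately. The two on the right come from the general chain of inequalities recalled above. The only real work is the lower bound, and I would get it by writing down an explicit characteristic map.

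\emph{Right-hand inequalities.} The inequality $s(\Bier_c(M))\le s_p(\Bier_c(M))$ is the middle part of the chain
$$
m-\chi(K)\le s(K)\le s_p(K)\le m-n.
$$
Since $\Bier_c(M)$ is a $(|c|-2)$-dimensional sphere, we have $n=|c|-1$. The bound $s_p(K)\le f_0(K)-n$ then gives $s_p(\Bier_c(M))\le f_0(\Bier_c(M))-|c|+1$.

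\emph{Lower bound.} By the definition of the complex Buchstaber number, it suffices to construct a characteristic map
$$
\Lambda\colon V\to\Z^{|c|},
$$
where $V\subseteq\tilde X$ is the set of real vertices of $\Bier_c(M)$. The requirement is that every facet is sent to part of a lattice basis. It is enough to define $\Lambda$ on all of $\tilde X$ and then restrict to $V$.

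Write $\Z^{|c|}=\bigoplus_{i=1}^m\Z^{c_i}$, and let $e_{i,1},\ldots,e_{i,c_i}$ be the standard basis of the $i$-th summand. Define
$$
\Lambda(\gen{x}{i}{j})=e_{i,j}\ \text{ for } 1\le j\le c_i,\qquad \Lambda(\gen{x}{i}{0})=-(e_{i,1}+\cdots+e_{i,c_i}).
$$
The key observation is that in each block, any $c_i$ of these $c_i+1$ vectors form a lattice basis of $\Z^{c_i}$. This holds because replacing one $e_{i,j}$ by $-\sum_k e_{i,k}$ is a unimodular change of basis.

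\emph{Checking a facet.} Take a facet $G(x^a;x_i^j)$, with $a_i<j\le c_i$, so that $\gen{x}{i}{j}\ne\gen{x}{i}{a_i}$.
\begin{itemize}
\item For each $k\ne i$, the facet contains exactly the $c_k$ vertices $\tilde X_k\setminus\{\gen{x}{k}{a_k}\}$. Their images form a basis of the $k$-th summand.
\item In block $i$, the facet contains $c_i-1$ vertices. These lie in the $c_i$-set $\tilde X_i\setminus\{\gen{x}{i}{a_i}\}$, whose images form a basis of the $i$-th summand, so their images are part of that basis.
\end{itemize}
Taking the direct sum over all blocks, the image of the facet is part of a lattice basis of $\Z^{|c|}$. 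Every face lies in a facet, so $\Lambda$ restricted to $V$ is a characteristic map. This gives $s(\Bier_c(M))\ge f_0(\Bier_c(M))-|c|$.

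The only point needing care is that ghost vertices are simply dropped when restricting to $V$. This does not affect the facet condition, because facets consist of real vertices and the target dimension $|c|$ stays fixed. I do not expect any serious obstacle beyond verifying the block-wise basis property.
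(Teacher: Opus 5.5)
Your proposal is correct and follows essentially the same route as the paper. Both use the same block-wise characteristic map $\tilde X\to\Z^{|c|}$, restricted to the real vertices, together with the general chain $s\le s_p\le f_0-n$, $n=|c|-1$, for the upper bounds. The only difference is that you send $\gen{x}{i}{0}$ to $-\sum_k e_{i,k}$ where the paper uses $+\sum_k e_{i,k}$; this is immaterial, since in either case any $c_i$ of the $c_i+1$ block vectors form a lattice basis. Your explicit check of the facets $G(x^a;x_i^j)$ supplies the detail the paper leaves implicit.
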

\begin{proof}
Throughout the proof we denote by $v:=f_0(\Bier_c(M))$ and $n:=|c|-1=\dim(\Bier_c(M))+1$. According to the definition of Buchstaber invariants, it suffices to construct a map:
$\Lambda_{(p)}\colon [|\bar{c}|]\to\Z^{n+1}$, which sends each simplex to a part of a lattice basis, and then restrict this map to $[v]$. Denote the standard basis of the lattice $\Z^{|c|}$ by $\{e_{i,j}\,|\,1\leq i\leq m, 1\leq j\leq c_i\}$. Then we define:
$$
x_i^{j}\mapsto e_{i,j}\text{ for }1\leq i\leq m, 1\leq j\leq c_i\text{ and }x_i^{0}\mapsto \sum\limits_{k=1}^{c_i}e_{i,k}\text{ for }1\leq i\leq m.
$$

This shows that 
$v-n-1\leq s(\Bier_c(M))\leq s_p(\Bier_c(M))\leq v-n$,
which finishes the proof.
\end{proof}

Due to~\cite[Theorem 4.2]{LV}, each Bier sphere has the maximal possible Buchstaber number. We finish this section by showing that both possibilities for the value of Buchstaber invariant can occur for a Murai sphere.

\begin{theorem}\label{MainBuchThm}
Let $k\geq 3, c=(k+2,k)$, and $M=\langle x^a\mid |a|=k\rangle$ be a proper $c$-multicomplex. Then the Murai sphere $\Bier_c(M)$ is isomorphic to the boundary of the cyclic polytope $C(2k+4,2k+1)$ and $$
s(\Bier_c(M))=s_2(\Bier_c(M))=2.
$$
\end{theorem}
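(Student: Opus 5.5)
The plan has two independent parts: an explicit identification of $\Bier_c(M)$ with $\partial C(2k+4,2k+1)$ via Theorem~\ref{FaceIdealMuraiThm}, and a parity argument showing that no mod $2$ characteristic map of corank $3$ exists.

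\textbf{Step 1: the Stanley--Reisner ideal.} Here $|c|=2k+2$, so $\Bier_c(M)$ has dimension $2k$. The multicomplex $M$ consists of all $c$-monomials of degree $\le k$. Its minimal non-elements are the $c$-monomials of degree $k+1$, which gives
$$G(I_c(M))=\{x_1^ax_2^{k+1-a}\mid 1\le a\le k+1\}.$$
Taking complements, $M^\vee$ consists of all $c$-monomials of degree $\le k+1$, so
$$G(I_c(M^\vee))=\{x_1^ax_2^{k+2-a}\mid 2\le a\le k+2\}.$$
Theorem~\ref{FaceIdealMuraiThm} then gives the minimal non-faces explicitly: the polarizations $\pol$, the polarizations $\pol^\ast$, and the two sets $\tilde X_1,\tilde X_2$. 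None of these generators is a single variable, so all $|\bar c|=2k+4$ vertices are real.

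\textbf{Step 2: identification with the cyclic polytope.} Order the vertices linearly as
$$\gen{x}{2}{k},\ldots,\gen{x}{2}{1},\gen{x}{2}{0},\gen{x}{1}{0},\gen{x}{1}{1},\ldots,\gen{x}{1}{k+2}.$$
In this order each $\pol(x_1^ax_2^{k+1-a})$ is a contiguous block of $k+1$ vertices straddling the junction $\gen{x}{2}{0}\gen{x}{1}{0}$. Each $\pol^\ast(x_1^ax_2^{k+2-a})$ is a block at the right end together with a block at the left end, so it is an interval of the cyclic order passing through the endpoints.

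I will then show that the facets of $\Bier_c(M)$ are exactly the $2k+1$-subsets satisfying Gale's evenness condition for $C(2k+4,2k+1)$, with its odd-dimensional boundary convention. To do this I will compute the complements of the facets $G(x^a;x_i^j)$; these complements are $3$-element sets. I will check them against the Gale-evenness description of cofacets: a single vertex forming an ``odd'' end block plus a pair of adjacent vertices, etc. Comparing the number of facets on both sides finishes the identification.

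\textbf{Step 3: the lower bound.} The preceding theorem gives
$$s(\Bier_c(M))\ge f_0-|c|=2k+4-(2k+2)=2,$$
and also $s\le s_2$. So it remains to prove $s_2\le 2$, i.e.\ that there is no mod $2$ characteristic map $\Lambda\colon [2k+4]\to\Z_2^{2k+1}$.

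\textbf{Step 4: no mod $2$ characteristic map (main obstacle).} Suppose such a $\Lambda$ exists. Its kernel is a $3$-dimensional binary code $C\subset\Z_2^{2k+4}$. The characteristic condition is equivalent to the following: no nonzero codeword has support contained in a facet. Equivalently, every nonzero codeword meets every cofacet, where the cofacets are the $3$-sets listed in Step 2.

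For the cyclic structure, I expect these cofacets to force every nonzero codeword's support to be a transversal of the family of cofacets. Such transversals are very restricted; roughly, they must contain one of the two endpoints and hit every pair of adjacent interior vertices at the appropriate parity. So each support has size about $k+2$ and a rigid alternating shape.

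The contradiction will come from taking two such supports $u,w$ and observing that $u+w$ must again be a transversal. The alternating shapes cancel on a long interior stretch, and because $k\ge3$ that stretch has length at least $4$. Hence $u+w$ (or, if needed, one of the other combinations among the $7$ nonzero codewords) misses some cofacet of the form $\{\text{endpoint}\}\cup\{\text{two adjacent interior vertices}\}$.

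Carrying out this classification of transversals cleanly, and seeing exactly where $k\ge 3$ is used, is the delicate part. If the direct classification is messy, I would instead restrict $\Lambda$ to the link of a suitable face to reduce to a smaller cyclic polytope. I would then invoke an inductive or small-case computation; for example, $\partial C(7,4)$-type links are known to have real Buchstaber number $1$.
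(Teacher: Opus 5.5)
Your overall plan for Step 2 is the paper's: match the $3$-element complements of the facets $G(x^a;x_i^j)$ with Gale cofacets. The gap is that your concatenated order $\gen{x}{2}{k},\ldots,\gen{x}{2}{0},\gen{x}{1}{0},\ldots,\gen{x}{1}{k+2}$ is not a Gale order, so the check fails. As you note, each $\pol(x_1^ax_2^{k+1-a})$ becomes a run of $k+1$ consecutive vertices. A run is a face of $\partial C(2k+4,2k+1)$: it has at most one odd interior block, and $(2k+1)-(k+1)\geq 1$. By contrast, the $(k+1)$-element minimal non-faces of the cyclic polytope consist of pairwise non-adjacent vertices avoiding both endpoints. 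Concretely, $G(x_1;x_2^k)$ is a facet whose complement $\{\gen{x}{2}{k},\gen{x}{2}{0},\gen{x}{1}{1}\}$ sits at positions $1,k+1,k+3$ in your order, with the single vertex $\gen{x}{1}{0}$ between the last two; Gale evenness fails. Your description of the cofacets is also inaccurate. For $C(2k+4,2k+1)$ they are exactly the $3$-sets $p<q<r$ with $q-p$ and $r-q$ odd (e.g.\ $\{1,4,7\}$), not ``an odd end block plus an adjacent pair''. The two groups of vertices must be interleaved, as in the paper: $\gen{x}{1}{k+2},\gen{x}{1}{0},\gen{x}{2}{k},\gen{x}{1}{1},\gen{x}{2}{k-1},\ldots,\gen{x}{1}{k},\gen{x}{2}{0},\gen{x}{1}{k+1}$. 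Then a cofacet $\{\gen{x}{1}{a},\gen{x}{2}{b},\gen{x}{2}{c}\}$ with $a+b\leq k<a+c$ occupies odd, even, odd positions. A cofacet $\{\gen{x}{1}{a},\gen{x}{2}{b},\gen{x}{1}{c}\}$ with $a+b\leq k<b+c$ occupies even, odd, even positions (odd, even, odd when $c=k+2$). So Gale evenness holds in every case.

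Step 4 is a heuristic, not a proof. The transversal classification is only anticipated, and the fallback rests on a false claim. It is cleaner to use the columns of a generator matrix of your code rather than its codewords. If $g_1,\ldots,g_{2k+4}\in\Z_2^3$ are these columns, the condition is exactly that $\{g_p,g_q,g_r\}$ is a basis of $\Z_2^3$ for every cofacet $\{p,q,r\}$. In this language, $\partial C(7,4)$ does admit a corank-$3$ mod $2$ characteristic map. Its cofacets are $\{i,i+1,i+2\}$ and $\{i,i+1,i+4\}$ mod $7$, and none of them is a line $\{i,i+1,i+3\}$ of the cyclic Fano plane. So labelling $\Z/7$ by the nonzero vectors of $\Z_2^3$ compatibly with that plane works, and $s_2(\partial C(7,4))=3$. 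A corank-$3$ map on the sphere induces one on the link of an endpoint, which is $\partial C(n-1,d-1)$. Hence the reduction can only succeed if it stops at a link with no such map, and $\partial C(7,4)$ is not one. This matches Hasui's theorem, which the paper simply cites at this point and which is stated for $C^*(n+3,n)$ with $n\geq 6$. A direct argument is in fact short. In the Gale order every pair $i<j$ other than $\{1,2k+4\}$ lies in a cofacet: $\{i,i+1,j\}$ if $j-i$ is even, and $\{i,j,j+1\}$ or $\{i-1,i,j\}$ if $j-i$ is odd. So $g_1,\ldots,g_{2k+3}$ are $2k+3\geq 9$ pairwise distinct nonzero vectors of $\Z_2^3$, which is impossible; this is precisely where $k\geq 3$ enters. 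Your Step 3 lower bound via the preceding theorem is fine.
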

\begin{proof}
Recall that the Gale evenness condition provides a necessary and sufficient condition to determine a facet of a cyclic polytope. Let $T=\{t_1,\ldots,t_p\}$ with $t_1<t_2<\ldots<t_p$. Then a subset $T_{q}\subseteq T$ of cardinality $q$ forms a facet of a cyclic polytope (of the type) $C(p,q)$ if and only if any two elements in  $T\setminus T_{q}$ are separated by an even number of elements from $T_{q}$ in the sequence  $(t_{1},t_{2},\ldots ,t_{p})$. In the following, we make use of the notation $\Delta(p,q)$ for the $(q-1)$-dimensional simplicial sphere equal to the boundary of $C(p,q)$ and we denote by $V(K)$ the set of all real vertices of a simplicial complex $K$. 

The required isomorphism is given by the map $f\colon V(\Bier_c(M))\to T$: 
\begin{align*}
f(x_{1,i})&=t_{2i+2}, \text{ for }i=0,\ldots,k+1,\\
f(x_{2,i})&=t_{2(k-i)+3},\text{ for } i=0,\ldots,k,\\
f(x_{1,k+2})&=t_{1}.
\end{align*}
We have the following 3 possibilities for a facet. 

For $\sigma=V(\Bier_c(M))\setminus\{x_{1,a},x_{2,b},x_{2,c}\}$, where $a+b\le k$ and $a+c>k$, the element $f(x_{1,a})$ is in between the elements $f(x_{2,b})$ in $f(x_{2,c})$, so $f(\sigma)$ is a facet.

For $\sigma=V(\Bier_c(M))\setminus\{x_{1,a},x_{2,b},x_{1,c}\}$, where $a+b\le k$, $a+c>k$, and $c<k+2$, the element $f(x_{2,b})$ is in between the elements $f(x_{1,a})$ in $f(x_{1,c})$, so $f(\sigma)$ is a facet.

For $\sigma=V(\Bier_c(M))\setminus\{x_{1,a},x_{2,b},x_{1,c}\}$, where $a+b\le k$, and $c=k+2$, the element $f(x_{2,b})$ is to the right to both elements $f(x_{1,a})$ and $f(x_{1,c})=t_1$, so $f(\sigma)$ is a facet.

Therefore, the Gale evenness condition is satisfied in all the cases, and hence $f$ yields a simplicial isomorphism between $\Bier_c(M)$ and $\Delta(2k+4,2k+1)$. By~\cite[Theorem 1.4]{Hasui}, there exists neither a small cover, nor a quasitoric manifold over a dual cyclic polytope $C^*(n+3,n)$ for all $n\geq 6$. Hence for the Buchstaber numbers of $\Bier_c(M)$ we have:
$$
s(\Bier_c(M))\leq s_2(\Bier_c(M))<(2k+4)-(2k+1)=3.
$$
Since $s(\mathcal K)=1$ if and only if $\mathcal K$ is a simplex, we get $s(\Bier_c(M))=s_2(\Bier_c(M))=2$.
\end{proof}

The next three open problems arise naturally in this context. Recall that a $d$-sphere is called \emph{neighborly} if each set of its $\lceil\frac{d}{2}\rceil$ vertices forms a face. In particular, the boundaries of cyclic polytopes are neighborly spheres. 

\begin{problem}
Classify all neighborly Murai spheres.    
\end{problem}

As we have seen in Lemma~\ref{BierWithCompleteGraphLemma}, a Bier sphere is neighborly if and only if it is either the boundary of a simplex, or has dimension not greater than two. However, the last theorem shows that the class of Murai spheres contains neighborly spheres different from joins of boundaries of simplices, at least in each even dimension greater than $4$.  

\begin{problem}
Classify all Murai spheres with maximal possible Buchstaber numbers: 
$$
s(\Bier_c(M))=s_p(\Bier_c(M))=v-n\text{ for all prime }p\in\N,
$$
where $v=f_0(\Bier_c(M))$ and $n=\dim(\Bier_c(M))+1=|c|-1$.
\end{problem}

Due to~\cite[Theorem 4.2]{LV}, these equalities hold for all Bier spheres. On the other hand, Theorem~\ref{MainBuchThm} shows that this is not true for an arbitrary Murai sphere.

\begin{problem}
Find a non-polytopal Murai sphere.    
\end{problem}

Since the class of Bier spheres is a proper subclass in the class of Murai spheres, there exist infinitely many non-polytopal Murai spheres.

\subsection*{Acknowledgements}
The authors are grateful to Matvey Sergeev, Marinko Timotijevi\'c, and Rade \v{Z}ivaljevi\'c for numerous fruitful discussions, valuable comments and suggestions. Limonchenko was supported by the Serbian Ministry of Science, Technological Development and Innovation through the Mathematical Institute of the Serbian Academy of Sciences and Arts. Vavpeti\v{c} was supported by the Slovenian Research and Innovation Agency program P1-0292 and the grant J1-4031.

\normalsize

\end{document}